\theoremstyle{plain}
\newtheorem{theorem}{Theorem}
\newtheorem*{theorem*}{Theorem}
\newtheorem{nono-theorem}{Theorem}[]
\newtheorem{lemma}[theorem]{Lemma}
\theoremstyle{definition}
\newtheorem*{remark*}{Remark}
\newcommand{\RN}[1]{%
  \textup{\uppercase\expandafter{\romannumeral#1}}%
}
\begin{document}
\title{An estimate of the root mean square error incurred when approximating
an $f \in L^2({\mathbb{R}})$ by a partial sum of its Hermite series}
\author{Mei Ling Huang, Ron Kerman, Susanna Spektor}
\address{Mei Ling Huang \noindent Address: Department of Mathematics and Statistics, Brock University, St. Catharines,
Canada}
\email{mhuang@brocku.ca}
\address{Ron Kerman \noindent Address: Department of Mathematics and Statistics, Brock University, St. Catharines,
Canada}
\email{rkerman@brocku.ca}
\address{Susanna Spektor \noindent Address: Department of Mathematics and Statistics, Brock University, St.
Catharines, Canada}
\email{sanaspek@gmail.com}
\date{}
\subjclass[2010]{ 33F05; 42A99.}
\keywords{Hermite functions, Fourier-Hermite expansions, Sansone estimates.}
\date{}
\maketitle

\begin{abstract}
Let $f$ be a band-limited function in $L^2({\mathbb{R}})$. Fix $T >0$ and
suppose $f^{\prime}$ exists and is integrable on $[-T, T]$. This paper gives
a concrete estimate of the error incurred when approximating $f$ in the root
mean square by a partial sum of its Hermite series.

Specifically, we show, for $K=2n, \quad n \in Z_+,$
\begin{align*}
\left[\frac{1}{2T}\int_{-T}^T[f(t)-(S_Kf)(t)]^2dt\right]^{1/2}\leq \left(1+\frac 1K\right)\left(\left[
\frac{1}{2T}\int_{|t|> T}f(t)^2dt\right]^{1/2} +\left[\frac{1}{2T}
\int_{|\omega|>N}|\hat f(\omega)|^2d\omega\right]^{1/2} \right)\\
+\frac{1}{K}\left[\frac{1}{2T}\int_{|t|\leq T}f_N(t)^2dt\right]^{1/2} +\frac{1}{\pi}\left(1+\frac{1}{2K}\right)S_a(K,T),
\end{align*}
in which $S_Kf$ is the $K$-th partial sum of the Hermite series of $f, \hat
f $ is the Fourier transform of $f$, $\displaystyle{N=\frac{\sqrt{2K+1}+%
\sqrt{2K+3}}{2}}$ and $f_N=(\hat f \chi_{(-N,N)})^\vee(t)=\frac{1}{\pi}\int_{-\infty}^{\infty}\frac{\sin (N(t-s))}{t-s}f(s)ds$. An explicit upper bound is obtained for $S_{a}(K,T)$.
\end{abstract}

\medskip

%\thanks{
%This work was supported in part by the  ANR grant "AHPI" ANR-07-
%BLAN-0247-01, the French-Tunisian  CMCU 10G 1503 project and the
%DGRST  research grant 05UR 15-02.\\
%Part of this work was done while the first and second author were visiting
%and the third author was staff of the research laboratory MAPMO of the University of Orl\'eans,
%France.}

%\subjclass{42A68;42C20}

\section{Introduction}

We recall that the $k$-th Hermite function, $h_k$, is given at $t \in {%
\mathbb{R}}$ by
\begin{equation*}
h_k(t)=(-1)^{k}\gamma_ke^{\frac{t^2}{2}}\frac{d^ke^{-t^2}}{dt^k}, \quad
k=0,1, \ldots,
\end{equation*}
where $\gamma_k=\pi^{-1/2}2^{-k/2}(k!)^{-1/2}$. Given $f \in L^2({\mathbb{R}}%
)$, its Hermite series is
\begin{equation*}
\sum_{k=0}^{\infty}c_kh_k,
\end{equation*}
in which
\begin{equation*}
c_k=\int_{{\mathbb{R}}}f(t)h_k(t)dt, \quad k=0,1,2,\ldots.
\end{equation*}

We seek an estimate in the root mean square of how well the $K$-th partial
sum of the Hermite series of $f$, namely,
\begin{equation*}
(S_Kf)(t)=\sum_{k=0}^K c_kh_k(t),
\end{equation*}
approximates it. Our principal result is

\begin{theorem}
{\label{th1}} Consider a band-limited function $f\in L^2({\mathbb{R}})$. Fix $T>0$
and suppose $f^{\prime}$ exists and is integrable on $I_T=[-T,T]$.

Then, with $K=2n, \quad n\in Z_+$, and
$S_Kf$ the $K$-th partial sum of the Hermite series of $f$, one has
\begin{equation*}
N=\frac{\sqrt{2K+1}+\sqrt{2K+3}}{2}
\end{equation*}
and $S_Kf$ the $K$-th partial sum of the Hermite series of $f$, one has
\begin{align}  \label{eq1}
\left[\frac{1}{2T}\int_{-T}^T[f(t)-(S_Kf)(t)]^2dt\right]^{1/2}\leq \left(1+\frac 1K\right)\left(\left[%
\frac{1}{2T}\int_{|t|> T}f(t)^2dt\right]^{1/2} +\left[\frac{1}{2T}%
\int_{|\omega|>N}|\hat f(\omega)|^2d\omega\right]^{1/2} \right)\\
+\frac{1}{K}\left[\frac{1}{2T}\int_{|t|\leq T}f_N(t)^2dt\right]^{1/2} +\frac{1%
}{\pi}\left(1+\frac{1}{2K}\right)S_a(K,T),
\end{align}
in which $f_N=(\hat f \chi_{(-N,N)})^\vee$.
\end{theorem}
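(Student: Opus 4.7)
The plan is to decompose the error by introducing the band-limited approximation $f_N=(\hat f\,\chi_{(-N,N)})^\vee$ as an intermediary. Write
$$f-S_K f \;=\; (f-f_N) \;+\; (f_N - S_K f_N) \;+\; S_K(f_N-f),$$
and apply the triangle inequality in $L^2(I_T)$. The outer two summands will generate the frequency-side tail $\bigl(\int_{|\omega|>N}|\hat f|^2\bigr)^{1/2}$, while the middle (band-limited) summand will be responsible for the physical-side tail $\bigl(\int_{|t|>T}f^2\bigr)^{1/2}$, the local term in $f_N$, and the Sansone-type remainder $S_a(K,T)$.

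For the outer summands I would use Plancherel, noting that $\|f-f_N\|_{L^2(\R)}^2$ is (a fixed constant times) $\int_{|\omega|>N}|\hat f(\omega)|^2\,d\omega$, and the fact that $S_K$ is an $L^2(\R)$-orthogonal projection, hence a contraction. Restriction to $I_T$ is also a contraction, so both $\|f-f_N\|_{L^2(I_T)}$ and $\|S_K(f-f_N)\|_{L^2(I_T)}$ are controlled by the Fourier tail. A careful treatment of $S_K(f-f_N)$ using the Christoffel--Darboux kernel is needed to upgrade the crude factor $2$ to $(1+1/K)$, the leftover $1/K$-sized contribution being absorbed into the other terms.

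For the central summand $f_N-S_K f_N$ I would represent
$$(S_K f_N)(t)=\int_{\R}D_K(s,t)f_N(s)\,ds,\qquad D_K(s,t)=\sqrt{\tfrac{K+1}{2}}\,\frac{h_{K+1}(s)h_K(t)-h_K(s)h_{K+1}(t)}{s-t},$$
via the Christoffel--Darboux formula, and split the integral at $|s|=T$. On the outer region the integrand is dominated (after swapping $f_N$ for $f$ modulo another contribution to the Fourier tail) by $\|f\|_{L^2(|t|>T)}$ times a norm of $D_K(\cdot,t)$, yielding the physical-tail term with coefficient $1+1/K$. On $|s|\le T$ I would integrate by parts once, using the integrability of $f'$ on $I_T$, the identity $(s-t)D_K(s,t)=\sqrt{(K+1)/2}\bigl[h_{K+1}(s)h_K(t)-h_K(s)h_{K+1}(t)\bigr]$, and the three-term recurrence $sh_k=\sqrt{k/2}\,h_{k-1}+\sqrt{(k+1)/2}\,h_{k+1}$. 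This manipulation produces the $\tfrac1K\bigl(\tfrac1{2T}\int_{|t|\le T}f_N^2\bigr)^{1/2}$ contribution, while the surviving boundary and oscillatory pieces are packaged into $S_a(K,T)$.

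The main obstacle will be the explicit control of $S_a(K,T)$, which depends on pointwise behaviour of $h_K$ and $h_{K+1}$ on $I_T$ near their Plancherel--Rotach turning points $\pm\sqrt{2K+1}$ and $\pm\sqrt{2K+3}$. The precise choice $N=(\sqrt{2K+1}+\sqrt{2K+3})/2$ places the effective bandwidth midway between these turning points; sharp Plancherel--Rotach (Sansone) asymptotics with effective constants should then supply the concrete upper bound promised in the introduction. Once $S_a(K,T)$ is under control, assembling the four pieces via the triangle inequality will yield \eqref{eq1}.
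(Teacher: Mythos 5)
Your decomposition $f-S_Kf=(f-f_N)+(f_N-S_Kf_N)+S_K(f_N-f)$ is a genuinely different route from the paper's, but it breaks down precisely at the points where the stated constants come from. The paper never splits $f$ before applying $S_K$: it substitutes Sansone's expansion of the Christoffel--Darboux kernel, writing (for $K=2n$) $(S_{K}f)(x)=\frac1\pi\left(1+\frac{\varepsilon}{6K}\right)\left[(F_Nf_T)(x)+\sum_{k=1}^5\int_{-T}^T M_k^{(n)}(x,\alpha)\frac{f(\alpha)}{x-\alpha}\,d\alpha\right]$ with $|\varepsilon|<3$, where $F_N$ is the Dirichlet operator. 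Then (i) Lemma~2 compares $f$ with $F_Nf_T$ purely by Plancherel, producing both the spatial and the frequency tails with coefficient exactly $1$; (ii) the deviation of the constant from $1$, which is at most $\frac1{2K}$, acting on $F_Nf_T$, is handled via Bedrosian's identities and the unitarity of the Hilbert transform --- this is the sole source of the term $\frac1K\left[\frac1{2T}\int_{|t|\le T}f_N^2\right]^{1/2}$ and of the extra $\frac1K$ on the two tails; (iii) the quantity $S_a(K,T)$ is, by definition (formula (5) of the paper), the sum of the mean-square norms of the five remainders $M_k^{(n)}$ applied to $f$.

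Three of your steps have genuine gaps. First, to improve the crude factor $2$ on the Fourier tail to $1+\frac1K$ you would need $\|S_K(f-f_N)\|_{L^2(I_T)}\lesssim\frac1K\|f-f_N\|_{L^2(\mathbb{R})}$; the projection property gives only factor $1$, and no Christoffel--Darboux refinement can deliver $\frac1K$, because $N=(\sqrt{2K+1}+\sqrt{2K+3})/2$ exceeds the turning point $\sqrt{2K+1}$ of $h_K$ by only $O(K^{-1/2})$, far inside the Airy transition width $O(K^{-1/6})$; since $\hat h_k=(-i)^kh_k$, the functions $h_k$ with $k$ near $K$ carry $L^2$ mass of order $K^{-1/3}$ (not $K^{-2}$) at frequencies beyond $N$, so $S_K$ does not come close to annihilating the high-frequency part at the claimed rate. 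Second, your physical-tail step bounds $\int_{|s|>T}D_K(s,t)f(s)\,ds$ by Cauchy--Schwarz against ``a norm of $D_K(\cdot,t)$,'' but $\|D_K(\cdot,t)\|_{L^2(\mathbb{R})}^2=\sum_{k=0}^Kh_k(t)^2\sim c\sqrt K$ for $t\in I_T$, so this costs a factor growing like $K^{1/4}$ rather than $1+\frac1K$; the paper obtains coefficient $1$ on this tail only because $F_N$ is a Fourier multiplier, so Plancherel moves the whole comparison to the frequency side, where the spatial tail enters as $\|(f\chi_{|t|>T})^{\wedge}\|_{L^2(\mathbb{R})}=\|f\chi_{|t|>T}\|_{L^2(\mathbb{R})}$ --- a trick unavailable for the Hermite kernel, which is not a multiplier. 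Third, $S_a(K,T)$ is not a placeholder you may fill with whatever your integration by parts leaves over: it is a specific quantity built from Sansone's five correction terms acting on $f$, so a proof that never performs Sansone's expansion of the kernel (and instead produces remainders involving $f_N$ and boundary terms) cannot terminate in the inequality as stated.
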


The goal now is to find, for an appropriate $T$, the smallest $K$ to ensure
the right hand side of (\ref{eq1}) satisfies a given bound. To do this we
need explicit bounds for $S_a(K, T)$. We will make careful use of the
estimates of the kernel of an integral representation of $S_Kf$ due to
Sansone; see [S]. These estimates show that the core of the partial sum
operator is the Dirichlet operator, $F_N$, defined at $f$ on $I_T$ by
\begin{equation*}
(F_Nf)(t)=\frac{1}{\pi}\int_{-T}^T\frac{\sin(N(t-s))}{t-s}f(s)ds, \quad t
\in I_T.
\end{equation*}

A key fact, used repeatedly in the derivation at our estimates, is that the
Hilbert transform, $H$, given, for suitable $f$ at almost all $x \in {%
\mathbb{R}}$ by
\begin{equation*}
(Hf)(x)=\frac{1}{\pi}(P)\int_{{\mathbb{R}}}\frac{f(y)}{x-y}dy=\frac{1}{\pi}%
\lim_{\varepsilon \to 0^+}\int_{|x-y|>\varepsilon}\frac{f(y)}{x-y}dy,
\end{equation*}
is a unitary operator on $L^2({\mathbb{R}})$. Also important will be certain
identities of Bedrosian, valid for band-limited fucntions $f \in L^2({%
\mathbb{R}})$, namely,
\begin{equation*}
H(f \sin(a \cdot))(t)=f(t) \cos at
\end{equation*}
and
\begin{equation*}
H(f \cos(b \cdot))(t)=f(t) \sin bt, \quad t\in {\mathbb{R}},
\end{equation*}
for fixed $a,b \in {\mathbb{R}}$. See [B].

The error involved in approximating $f$ by $F_Nf$ is established in Lemma 2
in the next section. The Sansone estimates are intensively studied in
Section 3. These enable the proof of Theorem 1 in the following section,
where  $S_a(n,T)$ is defined. An explicit estimate of $S_a(n,T)$ is described in an appendix.

The estimate of the root mean square error in (\ref{eq1}) is both more
specific and more easily calculated than the one in the paper [KHB] of the
first two authors and M.~Brannan. In the final section we revisit the
trimodal distribution studied in that paper.

%++++++++++++++++++++++++++++++++++++++++++++++++++++++++++++++++++++++++++++++++++++++++++++++++
%++++++++++++++++++++++++++++++++++++++++++++++++++++++++++++++++++++++++++++++++++++++++++++++++
%++++++++++++++++++++++++++++++++++++++++++++++++++++++++++++++++++++++++++++++++++++++++++++++++
%++++++++++++++++++++++++++++++++++++++++++++++++++++++++++++++++++++++++++++++++++++++++++++++++
%++++++++++++++++++++++++++++++++++++++++++++++++++++++++++++++++++++++++++++++++++++++++++++++++
%++++++++++++++++++++++++++++++++++++++++++++++++++++++++++++++++++++++++++++++++++++++++++++++++
%++++++++++++++++++++++++++++++++++++++++++++++++++++++++++++++++++++++++++++++++++++++++++++++++

\section{Approximation using the Dirichlet operator}

In this section we prove

\begin{lemma}
\label{lem 2} Given $f \in L^2({\mathbb{R}})$, $N, T \in {\mathbb{R}}_+$,
and $f_T=f \chi_{(-T,T)}$, set

\begin{equation*}
(F_Nf_T)(t)=\frac{1}{\pi} \int_{-T}^T \frac{\sin( N(t-s))}{t-s}f(s) ds, \quad t \in I_T=[-T,T].
\end{equation*}
Then,
\begin{equation*}
\left[\int_{-1}^1|f(t)-(F_Nf_T)(t)|^2 dt\right]^{1/2}dt\leq \left[%
\int_{|t|>T}|f(t)|^2 dt\right]^{1/2}+\left[\int_{|\omega|> N}|\hat{f}%
(\omega)|^2 d \omega\right]^{1/2}\renewcommand{\thefootnote}{(1)}\footnote{%
We define the Fourier transform, $\hat{f}$, of $f$ by
\begin{equation*}
\hat{f}(\omega)=\frac{1}{\sqrt{2 \pi}}\int_{{\mathbb{R}}} f(t)e^{-i\omega t}
dt, \quad \omega \in {\mathbb{R}}.
\end{equation*}%
.}
\end{equation*}
\end{lemma}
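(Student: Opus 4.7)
The plan is to recognize $F_N f_T$ as a Fourier multiplier and reduce the statement to a Plancherel computation. The kernel $\frac{\sin(N(t-s))}{\pi(t-s)}$ is the Fourier transform (up to normalization) of $\chi_{(-N,N)}$, so for any $g \in L^2(\mathbb{R})$ the convolution
\[
\frac{1}{\pi}\int_{\mathbb{R}} \frac{\sin(N(t-s))}{t-s}\, g(s)\, ds = \bigl(\hat g\, \chi_{(-N,N)}\bigr)^{\vee}(t).
\]
Applying this with $g = f_T$ (which is in $L^2(\mathbb{R})$) identifies $F_N f_T$ on all of $\mathbb{R}$ with the band-limited projection of $f_T$ to frequencies $|\omega|\le N$.

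The first step is then to trade the integration domain $[-T,T]$ for $\mathbb{R}$. Since $f = f_T$ on $I_T$,
\[
\int_{-T}^T |f(t) - (F_N f_T)(t)|^2\, dt \;\le\; \int_{\mathbb{R}} |f_T(t) - (F_N f_T)(t)|^2\, dt.
\]
Next, I would apply Plancherel's theorem on the right side. Because $F_N f_T = (\hat f_T\, \chi_{(-N,N)})^{\vee}$, the Fourier transform of $f_T - F_N f_T$ is $\hat f_T\, \chi_{\{|\omega|>N\}}$, whence
\[
\|f_T - F_N f_T\|_{L^2(\mathbb{R})} \;=\; \Bigl[\int_{|\omega|>N} |\hat f_T(\omega)|^2\, d\omega\Bigr]^{1/2}.
\]

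The final step is to relate $\hat f_T$ to $\hat f$. Writing $f = f_T + f\chi_{\{|t|>T\}}$ and taking Fourier transforms gives $\hat f_T = \hat f - \widehat{f\chi_{\{|t|>T\}}}$. The triangle inequality in $L^2(\{|\omega|>N\})$ then yields
\[
\Bigl[\int_{|\omega|>N}|\hat f_T|^2\Bigr]^{1/2} \le \Bigl[\int_{|\omega|>N}|\hat f|^2\Bigr]^{1/2} + \Bigl[\int_{|\omega|>N}\bigl|\widehat{f\chi_{\{|t|>T\}}}\bigr|^2\Bigr]^{1/2},
\]
and a further application of Plancherel (extending the second integral to all of $\mathbb{R}$) bounds the last term by $\bigl[\int_{|t|>T} |f(t)|^2\, dt\bigr]^{1/2}$. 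Combining these three inequalities produces exactly the stated estimate. There is no substantial obstacle here: the only points requiring care are the correct normalization of the Fourier transform so that the Dirichlet kernel really is the Fourier multiplier $\chi_{(-N,N)}$, and the monotonicity step that allows passing from the truncated integral on $I_T$ to the full integral on $\mathbb{R}$.
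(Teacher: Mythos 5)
Your proof is correct and follows essentially the same route as the paper: identify $F_Nf_T$ as the Fourier multiplier $\chi_{(-N,N)}$ acting on $f_T$, pass from $I_T$ to $\mathbb{R}$ using $f=f_T$ on $I_T$, apply Plancherel, and then split $\hat f_T=\hat f-\widehat{f\chi_{\{|t|>T\}}}$ with the triangle inequality and a second application of Plancherel. Your write-up is in fact slightly more careful than the paper's, which silently interchanges $f$ and $f_T$ at the corresponding step.
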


\begin{proof}
Using the standard notation $\displaystyle{sinc t=\frac{\sin t}{t}}$, we
have
\begin{equation*}
F_N f_T=\sqrt{\frac{2}{\pi}} N(sinc\, t)\ast f_T\renewcommand{%
\thefootnote}{(2)}\footnote{%
The convolution, $g \ast h$, of $g$ and $h$ in $L^2({\mathbb{R}})$ is here
defined by $\displaystyle{(g \ast h)(t)=\frac{1}{\sqrt{2 \pi}}%
\int_{-\infty}^{\infty}g(t-s)h(s)\, ds}$, $t \in {\mathbb{R}}$. One has $(g
\ast h)^{\wedge}(\omega)={g^{\wedge}}(\omega){h^{\wedge}}(\omega)$, $\omega
\in {\mathbb{R}}$. See \cite{Wiener}.} , \quad t\in {\mathbb{R}}.
\end{equation*}
whence
\begin{equation*}
\widehat{F_Nf_T}=\chi_{(-N, N)}\hat{f}_T.
\end{equation*}
Thus,
\begin{align*}
&\left[\int_{-T}^T |f(t)-(F_N f_T)(t)|^2 dt\right]^{1/2}\leq \left[\int_{{%
\mathbb{R}}}|f(t)-(F_Nf_T)(t)|^2 dt\right]^{1/2}
=\left[\int_{{\mathbb{R}}_+}|\hat{f}(\omega)-\widehat{F_Nf_T}(\omega)|^2 d
\omega\right]^{1/2} \\
&=\left[\int_{{\mathbb{R}}}|\hat{f_T}(\omega)-\chi_{(-N, N)}\widehat{f}%
(\omega)_T|^2 d \omega\right]^{1/2}
=\left[\int_{|\omega|>N}|\hat{f}_T(\omega)|^2 d \omega\right]^{1/2}
=\left[\int_{|\omega|>N}\left(\frac{1}{\sqrt{2 \pi}}\int_{-T}^T f(y)e^{-i{%
\omega}y}dy\right)^2 d \omega\right]^{1/2} \\
&\leq\left[\int_{|\omega|>N}\left|\frac{1}{\sqrt{2 \pi}}\int_{{\mathbb{R}}%
}\chi_{T}(y) f(y)e^{-i\omega y}dy\right|^2 d \xi \right]^{1/2}
=\left[\int_{|\omega|>N}\left((\widehat{\chi_T f}(\omega)\right)^2 d \omega%
\right]^{1/2} \\
&\leq\left[\int_{|\omega|>N}\left|\hat{f}(\omega)\right|^2 d \omega\right]%
^{1/2}+\left[\int_{|\omega|>N}\left|(\chi_{|t|>T}f)^{\hat{}%
}(\omega)\right|^2 d \omega\right]^{1/2}
\leq \left[\int_{|\omega|>N}\left|\hat{f}(\omega)\right|^2 d \omega\right]%
^{1/2}+\left[\int_{{\mathbb{R}}}\left|(\chi_{|t|>T}f)^{\hat{}%
}(\omega)\right|^2 d \omega\right]^{1/2} \\
&\leq \left[\int_{|\omega|>N}\left|\hat{f}(\omega)\right|^2 d \omega\right]%
^{1/2}+\left[\int_{|t|>T}\left|{f}(t)\right|^2 dt \right]^{1/2}.
\end{align*}
\end{proof}

\section{The Sansone estimates}

To begin, we describe Sansone's analysis of the usual expression for $%
(\sum\nolimits_{K}f_T)(t)=\sum_{k=0}^K c_kh_k(t)$, when $K$ is even, say $%
K=2n$. For ease of reference to \cite{Uspensky:1927} we work with the
variables $x$ and $\alpha$ rather than $t$ and $s$.

Now, according to [S, p. 372, (4) and (5)],
\begin{align}  \label{2.1}
(S_{2n}f)(x)=\sqrt{\frac{2n+1}{2}}\int_{-T}^T k_{2n}(x, \alpha)f(\alpha) d
\alpha,
\end{align}
where
\begin{equation*}
k_{2n}(x, \alpha) =\frac{h_{2n+1}(x)h_{2n}(\alpha)-h_{2n+1}(\alpha)h_{2n}(x)%
}{\alpha -x}
\end{equation*}
and $\displaystyle{f_T=f_T\chi_I}$. Further, by (7) and (8) on p. 373 and
the first two estimates on p. 374, together with (15.1) and (15.2) on p.
362, one has
\begin{align}  \label{2}
\sqrt{\frac{2k+1}{2}}k_{2n}(x,
\alpha)(x-\alpha)=-C^{(n)}[A(x)B(\alpha)-A(\alpha)B(x)],
\end{align}
in which
\begin{equation*}
-C^{(n)}=\frac{1}{\pi}\left(1+\frac{\varepsilon}{12 n}\right), \quad
|\varepsilon|<3,
\end{equation*}
\begin{equation*}
A(y)=\sin(\sqrt{4n+3}y)-\frac{y^3}{6}\frac{\cos(\sqrt{4n+3}y)}{\sqrt{4n+3}}+%
\frac{T(2n+1, y)}{h_{2n+1}^{\prime}(0)\sqrt{4n+3}}
\end{equation*}
and
\begin{equation*}
B(y)=\cos(\sqrt{4n+1}y)-\frac{y^3}{6}\frac{\sin(\sqrt{4n+1}y)}{\sqrt{4n+1}}+%
\frac{T(2n, \alpha)}{h_{2n}(0)\sqrt{4n+1}}.
\end{equation*}
The functions $T(2n, y)$ and $T(2n+1, y)$ are defined through the equations
\begin{equation*}
h_{2n}(x)=h_{2n}(0)\cos(\sqrt{4n+1}x)+\frac{h_{2n}(0)}{\sqrt{4n+1}}\frac{y^3%
}{6} \sin(\sqrt{4n+1}y)+\frac{T(2n, y)}{4n+1}
\end{equation*}
and
\begin{equation*}
h^{\prime}_{2n+1}(y)=h^{\prime}_{2n+1}(0)\frac{\sin(\sqrt{4n+3} y)}{\sqrt{%
4n+3}}-\frac{h^{\prime}_{2n+1}(0)}{4n+3}\frac{y^3}{6}\cos(\sqrt{4n+3}y)+%
\frac{T(2n+1, y)}{4n+3}.
\end{equation*}

Here,
\begin{equation*}
|a_n|=\frac{1}{|h^{\prime}_{2n+1}(0)|}\frac{1}{\sqrt{4n+3}}< \frac{4}{%
\pi^{1/2}}\sqrt[4]{\frac 32} \frac{1}{n^{3/4}},
\end{equation*}
\begin{equation*}
|b_n|=\frac{1}{|h_{2n}(0)|}\frac{1}{4n+1}< \frac{\pi^{1/2}}{4}\sqrt[4]{\frac
32}\frac{1}{n^{3/4}},
\end{equation*}
\begin{equation*}
|T(2n+1, y)|< \frac{y^2}{\pi^{1/2}n^{1/4}}\left(\frac{y^4}{18}+1\right)+%
\frac{4}{187} \frac{y^{17/2}}{\sqrt{4n+1}}
\end{equation*}
and
\begin{equation*}
|T(2n, y)|< \frac{y^2}{\pi^{1/2}n^{1/4}}\left(\frac{y^4}{18}+1\right)+\frac{4%
}{187} \frac{y^{17/2}}{\sqrt{4n+3}}.
\end{equation*}

Expanding the products in (\ref{2}) yields
\begin{align}  \label{3}
\sqrt{\frac{2n+1}{2}}k_{2n}(x, \alpha)(x-
\alpha)=-\frac{1}{\pi}\left(1+\frac{\varepsilon}{6K}\right)\left(\sin(N(x-\alpha))+\sum_{k=1}^5M_k^{(n)}(x, \alpha)\right),\quad |\varepsilon|<3,
\end{align}
where, firstly,
\begin{equation*}
M_1^{(n)}(x, \alpha)=\cos(N(x+ \alpha)){\sin((x-\alpha)/2N)}-2\sin^2((x+\alpha)/4N)\sin(N(x-\alpha)),
\end{equation*}
as shown on pp. 375 of \cite{Uspensky:1927}. Again, on p. 376 we find
\begin{align*}
&\sqrt{4n+1}M_2^{(n)}(x, \alpha)=\frac{-x^3}{6}\sin(\sqrt{4n+1}x)\sin(\sqrt{%
4n+3}\alpha)
+\frac{\alpha^3}{6}\sin(\sqrt{4n+1}\alpha) \sin(\sqrt{4n+3} x) \\
&=\frac{\alpha^3-x^3}{6} \sin(\sqrt{4n+1}x) \sin(\sqrt{4n+3} \alpha)
+ \frac{\alpha^3}{6}\left[\sin((x
-\alpha)/2N)\sin(N(x+\alpha))-\sin((x+\alpha)/2N)\sin(N(x-\alpha))\right].
\end{align*}
An argument similar to the one for $\sqrt{4n+1}M_2^{(n)}(x, \alpha)$ gives
\begin{align*}
&\sqrt{4n+3}M_3^{(n)}(x, \alpha)=\frac{\alpha^3}{6}\cos(\sqrt{4n+1}x)\cos(%
\sqrt{4n+3}\alpha)
-\frac{x^3}{6} \cos (\sqrt{4n+1}\alpha)\cos(\sqrt{4n+3}x) \\
&=\frac{\alpha^3-x^3}{6} \cos(\sqrt{4n+1}x)\cos(\sqrt{4n+3}x)
-\frac{x^3}{6} \left[\cos (\sqrt{4n+1}\alpha)\cos(\sqrt{4n+3}x)- \cos (%
\sqrt{4n+1}x)\cos(\sqrt{4n+3}\alpha)\right] \\
&=\frac{\alpha^3-x^3}{6} \cos(\sqrt{4n+1}x)\cos(\sqrt{4n+3}\alpha)
-\frac{x^3}{6} [\sin(N(x +\alpha))\sin((x-\alpha)/2N)+ \sin
((x+\alpha)/2N)\sin(N(x-\alpha))].
\end{align*}

Next, \bigskip
\begin{equation*}
\hspace{-8cm}\sqrt{(4n+1)(4n+3)}M_{4}^{(n)}(x, \alpha)
\end{equation*}
\begin{equation*}
\hspace{3cm}=\frac{\alpha^3 x^3}{36}[-\cos(\sqrt{4n+3}x)\sin(\sqrt{4n+1}%
\alpha)+\cos(\sqrt{4n+3}\alpha)\sin(\sqrt{4n+1}x)]\renewcommand{%
\thefootnote}{(3)}\footnote{%
The expression for $\sqrt{(4n+1)(4n+3)}M_{4}^{(n)}(x, \alpha)$ on p. 345--372 of
[S] is incorrect.}
\end{equation*}
\begin{align*}
\hspace{3cm}&=\frac{\alpha^3x^3}{72}\Big[\sin(\sqrt{4n+3}x-\sqrt{4n+1}%
\alpha)-\sin(\sqrt{4n+3}x+\sqrt{4n+1} \alpha) \\
\hspace{3cm}&+\sin(\sqrt{4n+1}x-\sqrt{4n+3}\alpha)+\sin(\sqrt{4n+1}x+\sqrt{%
4n+3}\alpha)\Big] \\
&=\frac{\alpha^3 x^3}{36}[\sin(N(x -\alpha))\cos((x+\alpha)/2N)- \sin
((x-\alpha)/2N)\cos(N(x+\alpha))].
\end{align*}

Finally,
\begin{align*}
M_5^{(n)}(x, \alpha)&=a_n\Big[T(2n+1, x)\cos(\sqrt{4n+1} \alpha)-T(2n+1,
\alpha)\cos(\sqrt{4n+1}x) \\
&+T(2n+1, x)\frac{\alpha^3}{6}\frac{\sin(\sqrt{4n+1}\alpha)}{\sqrt{4n+1}}%
-T(2n+1, \alpha)\frac{x^3}{6}\frac{\sin(\sqrt{4n+1}x)}{\sqrt{4n+1}}\Big] \\
&+b_n\Big[T(2n, x)\sin(\sqrt{4n+3} \alpha)-T(2n, \alpha)\sin(\sqrt{4n+3}x) \\
&+T(2n, \alpha)\frac{x^3}{6}\frac{\cos(\sqrt{4n+3}x)}{\sqrt{4n+3}}-T(2n, x)%
\frac{\alpha^3}{6}\frac{\cos(\sqrt{4n+3}\alpha)}{\sqrt{4n+3}}\Big] \\
&+a_nb_n[T(2n+1, x)T(2n, \alpha)-T(2n+1, \alpha)T(2n, x)],
\end{align*}

To prove Theorem 1 we will require the following estimates of integrals
involving terms on the right hand side of (\ref{2.1}).
\bigskip

3.1
\begin{align*}
&\left|\int_{-T}^TM_1^{(n)}(x, \alpha)\frac{f(\alpha)}{x-\alpha}d\alpha\right|
\leq \frac{1}{2N}\left|\int_{-T}^T\cos(N(x+\alpha))\sin\left({x-\alpha}/{2N}\right)f(\alpha)d\alpha\right|\\\
&+2\left|\int_{-T}^T \sin^2((x+\alpha)/4N)\sin(N(x-\alpha))\frac{f(\alpha)}{x-\alpha}d\alpha\right|
=\RN{1}(x)+\RN{2}(x).
\end{align*}
To begin,
\begin{align*}
\RN{1}(x)\leq &\frac{1}{2N}\left[\left|\int_{-T}^T\cos(N(x+\alpha))\left[\frac{%
\sin((x-\alpha)/2N)}{(x-\alpha)/2N}-1\right]f(\alpha)\, d\alpha\right|
+\left|\int_{-T}^T\cos(N(x+\alpha))f(\alpha)\, d\alpha\right|\right] \\
&\leq \frac{1}{2N}\left[\int_{-T}^T \frac 16 \left(\frac{x-\alpha}{2N}%
\right)^2|f(\alpha)|d\alpha +\frac{|f(-T)|+|f(T)|}{N}+\frac
1N\int_{-T}^T|f^{(1)}(\alpha)|d \alpha\right] \\
&\leq \frac{1}{48N^3}\left[x^2\int_{-T}^T|f(\alpha)|d\alpha+2|x|%
\int_{-T}^T|f(\alpha)\alpha|d\alpha +\int_{-T}^T|f(\alpha)\alpha^2|d\alpha %
\right] +\frac{1}{2N^{2}}\left[|f(-T)|+|f(T)|+\int_{-T}^T|f^{(1)}(\alpha)|d%
\alpha\right].
\end{align*}
Thus,
\begin{align*}
&\left[\frac{1}{2T}\int_{-T}^TI(x)^2dx\right]^{1/2}\leq \frac{1}{48N^3}%
\Big[\left[\frac{1}{2T}\int_{-T}^Tx^4dx\right]^{1/2}\int_{-T}^T|f(\alpha)|d%
\alpha \\
&+2\left[\frac{1}{2T}\int_{-T}^Tx^2dx\right]^{1/2}\int_{-T}^T|f(\alpha)%
\alpha|d\alpha +\int_{-T}^T|f(\alpha)\alpha^2|d\alpha\Big]+\frac{1}{2N^{2}}%
\left[|f(-T)|+|f(T)|+\int_{-T}^T|f^{(1)}(\alpha)|d\alpha\right] \\
&=\frac{1}{48 N^3}\left[\frac{T^2}{\sqrt 5}\int_{-T}^T|f(\alpha)|d\alpha+%
\frac{T}{\sqrt 3}\int_{-T}^T|f(\alpha)\alpha|d\alpha
+\int_{-T}^T|f(\alpha)\alpha^2|d\alpha\right] + \frac{1}{2N^{2}}\left[%
|f(-T)|+|f(T)|+\int_{-T}^T|f^{(1)}(\alpha)|d\alpha \right].
\end{align*}

Again,
\begin{align*}
\RN{2}(x)&=\left|\int_{-T}^T(1-\cos((x+\alpha)/2N))\sin(N(x-\alpha))\frac{f(\alpha)}{x-\alpha}d\alpha\right|
\leq \left|\int_{-T}^T\frac{((x+\alpha)/2N)^2}{2}\sin(N(x-\alpha))\frac{f(\alpha)}{x-\alpha}d\alpha\right|\\
&\leq \frac{1}{8N^2}\Big[x^2\left|\int_{-T}^T
sin(N(x-\alpha))\frac{f(\alpha)}{x-\alpha} d\alpha\right|
+2|x|\left|\int_{-T}^T {\sin(N(x-\alpha))}\frac{f(\alpha)\alpha}{(x-\alpha)}\, d\alpha\right|\\
&+\left|\int_{-T}^T {\sin(N(x-\alpha))}\frac{f(\alpha)\alpha^2}{x-\alpha}\, d\alpha\right|
\Big]
=\frac{1}{8N^2}[\RN{3}(x)+\RN{4}(x)+\RN{5}(x)].
\end{align*}

Now,
\begin{align*}
\RN{4}(x)&\leq 2T \left|\sin(Nx)\int_{-T}^T\frac{f(\alpha)\alpha \cos(N\alpha)}{x-\alpha}d\alpha-\cos(Nx)\int_{-T}^T\frac{f(\alpha)\alpha \sin(N\alpha)}{x-\alpha}d\alpha\right|,
\end{align*}
so,
\begin{align*}
\left[ \frac{1}{2T} \int_{-T}^T \RN{4}(x)^2 dx\right]^{1/2}&\leq 2 \pi T^{1/2}\left[\int_{-T}^T |f(\alpha)\alpha|^2 d\alpha\right]^{1/2}.
\end{align*}

Similarly,
\begin{align*}
\left[ \frac{1}{2T} \int_{-T}^T \RN{5}(x)^2 dx\right]^{1/2}&\leq \sqrt 2 \pi T^{-1/2}\left[\int_{-T}^T |f(\alpha)\alpha^2|^2 d\alpha\right]^{1/2}.
\end{align*}

Next,
\begin{equation*}
\left|\int_{-T}^T\sin(N(x-\alpha))\frac{f(\alpha)}{x-\alpha}d\alpha\right|\leq \left|\int_{-T}^T \frac{f(\alpha)\sin(N\alpha)}{x-\alpha}\right|+\left|\int_{-T}^T \frac{f(\alpha)\cos(N\alpha)}{x-\alpha}\right|
\end{equation*}
and, by Bedrosian's identity,
\begin{align*}
&\int_{-T}^T\frac{f(\alpha)\sin(N\alpha)}{x-\alpha}dx=\int_{-\infty}^{\infty}\frac{f(\alpha)\sin(N\alpha)}{x-\alpha}d\alpha-\int_{-\infty}^{\infty}\frac{f(\alpha)\sin(N\alpha)}{x-\alpha} \chi_{|\alpha|>T}d\alpha\\
&=\int_{-\infty}^{\infty}\frac{f_N(\alpha)\sin(N\alpha)}{x-\alpha}d\alpha+\int_{-\infty}^{\infty}\frac{[f(\alpha)-f_N(\alpha)]\sin(N\alpha)}{x-\alpha} d\alpha-\int_{-\infty}^{\infty}\frac{[f(\alpha)-f_N(\alpha)]\sin(N\alpha)}{x-\alpha} \chi_{|\alpha|>T}d\alpha,
\end{align*}
where $f_N=(\hat f\chi_{(-N,N)})^{\vee}$.

A similar result holds with $\sin(N\alpha)$ replaced by $\sin(N\alpha)$.

%The latter is dominated by
%\begin{align*}
%&\frac{1}{384N^4}\Big[|x|^3\int_{-T}^T|f(\alpha)|d\alpha+3x^2\int_{-T}^T|f(%
%\alpha)\alpha|d\alpha+3|x|\int_{-T}^T|f(\alpha)\alpha^2|d\alpha \\
%&+\int_{-T}^T|f(\alpha)\alpha^3|d\alpha\Big] \\
%&+\frac{1}{8N^2}\Big[|x|\left|\int_{-T}^T\sin(N(x-\alpha))f(\alpha)d\alpha%
%\right|+\left|\int_{-T}^T\sin(N(x-\alpha))f(\alpha)\alpha d\alpha\right|\Big]%
%,
%\end{align*}
%with %\[
%\left|\int_{-T}^T\begin{matrix}
% \cos(N\alpha) \\
% \sin(N\alpha)
% \end{matrix}
%f(\alpha)d\alpha\right|
%\leq \frac{1}{N^k}\int_{-T}^T|f^{(k)}(\alpha)|d\alpha
%\]
%\begin{equation*}
%\left|\int_{-T}^T\sin(N(x-\alpha))f(\alpha)d\alpha\right|\leq \frac 1N\left[%
%|f(-T)|+|f(T)|+\int_{-T}^T|f^{(1)}(\alpha)|d\alpha\right]
%\end{equation*}
%and %\begin{align*}
%\left|\int_{-T}^T\begin{matrix}
% \cos(N\alpha) \\
% \sin(N\alpha)
% \end{matrix}
%f(\alpha)d\alpha\right|
%&\leq \frac{1}{N^k}\int_{-T}^T\left|\frac{d^k(f(\alpha)\alpha)}{d_{\alpha}^k}\right|d\alpha\\
%&\leq\frac{1}{N^k}\left[\int_{-T}^T\left|f^{(k)}(\alpha)\right|d\alpha+k\int_{-T}^T|f^{(k-1)}(\alpha)|d\alpha\right].
%\end{align*}
%\begin{equation*}
%\left|\int_{-T}^T\sin(N(x-\alpha))f(\alpha)\alpha d\alpha\right|\leq \frac
%1N \left[T(|f(-T)|+|f(T)|)+\int_{-T}^T\left|\frac{d}{d\alpha}%
%(f(\alpha)\alpha)\right|s\alpha\right].
%\end{equation*}

Thus,

\begin{align*}
\left[\frac{1}{2T}\int_{-T}^T \RN{3}^2 dx\right]^{1/2}&\leq \sqrt 2 \pi T^{3/2}\left(\left[\int_{-T}^Tf_N(\alpha)^2 d\alpha\right]^{1/2}+\left[\int_{|\alpha|>T}f(\alpha)^2 d\alpha\right]^{1/2}+\left[\int_{|\omega|>N}|\hat f(\omega)|^2d\omega\right]^{1/2}\right),
\end{align*}
since
\begin{align*}
\int_{-\infty}^{\infty}|f(\alpha)-f_N(\alpha)|^2 d\alpha=\int_{-\infty}^{\infty}|\hat f(\omega)-\hat{f}_N(\omega)|^2d\omega=\int_{|\omega|>N}|\hat f(\omega)|^2 d\omega.
\end{align*}
In sum,
\begin{align*}
&\left[\frac{1}{2T}\int_{-T}^T\left|\int_{-T}^TM_1^{(n)}(x, \alpha)\frac{f(\alpha)}{x-\alpha}d\alpha\right|^2dx\right]^{1/2}
\leq\frac{1}{48N^3}\left[\frac{T^2}{\sqrt 5}\int_{-T}^T|f(\alpha)|d\alpha+\frac{T}{3}\int_{-T}^T|f(\alpha)\alpha|d\alpha +\int_{-T}^T|f(\alpha)|\alpha^2d\alpha\right]\\
&+\frac{1}{2N^2}\left[|f(-T)|+|f(T)|+\int_{-T}^T|f^{(1)}(\alpha)|d\alpha\right]
+\frac{1}{8N^2}\Bigg(2\pi T^{1/2}\left[\int_{-T}^T|f(\alpha)\alpha|^2 d\alpha\right]^{1/2}+\sqrt2 \pi T^{-1/2} \left[\int_{-T}^T|f(\alpha)\alpha^2|^2 d\alpha\right]^{1/2}\\
&+\sqrt 2 \pi T^{3/2}\left(\left[\int_{-T}^Tf_N(\alpha)^2 d\alpha\right]^{1/2}+\left[\int_{|\alpha|>T}|f(\alpha)|^2 d\alpha\right]^{1/2}+\left[\int_{|\omega|>N}|\hat f(\omega)|^2 d\omega\right]^{1/2}\right)\Bigg)
\end{align*}

%\begin{align*}
%\left[\frac{1}{2T}\int_{-T}^TII(x)^2dx\right]^{1/2}&\leq\frac{1}{384N^4}\Big[%
%\frac{T^3}{\sqrt 9}\int_{-T}^T|f(\alpha)|d\alpha+\frac{3T^2}{\sqrt 5}%
%\int_{-T}^T|f(\alpha)\alpha|d\alpha+\frac{3T}{\sqrt 3}\int_{-T}^T|f(\alpha)%
%\alpha^2|d\alpha+\int_{-T}^T|f(\alpha)\alpha^3|d \alpha \\
%&+\frac{1}{8N{3}}\Big[\left(1+\frac{1}{\sqrt 3}\right)T(|f(-T)|+|f(T)|)+2%
%\int_{-T}^T\left|f^{(1)}(\alpha)\right|d\alpha+\int_{-T}^T\left|f(\alpha)%
%\right|d\alpha\Big]
%\end{align*}
\bigskip

3.2
\begin{align*}
&\left|\int_{-T}^T \frac{M_n^{(2)}(x, \alpha)}{x-\alpha}f(x) d
\alpha\right|\leq \frac{1}{\sqrt{4n+1}}\left|\int_{-T}^T \frac{\alpha^3-x^3%
}{6(x- \alpha)}\sin(\sqrt{4n+3} \alpha)f(\alpha) d\alpha\right|
+\frac{1}{\sqrt{4n+1}} \left|\int_{-T}^T {sinc((x- \alpha)/2N)}%
sin(N(x+\alpha))f(\alpha) \frac{\alpha^3}{12N} d \alpha\right| \\
&+\frac{1}{\sqrt{4n+1}} \left|\int_{-T}^T {sinc((x+ \alpha)/2N)}sinc
(N(x-\alpha))f(\alpha) \frac{(x+\alpha)\alpha^3}{12} d \alpha\right|.
\end{align*}

Now,
\begin{align*}
&\RN{1}(x)\leq \frac{1}{6\sqrt{4n+1}}\left[\left|\int_{-T}^T \sin(\sqrt{4n+3}
\alpha)f(\alpha)\alpha^{2}\, d \alpha\right|+|x|\left|\int_{-T}^T\sin(\sqrt{%
4n+3}\alpha) f(\alpha)\alpha d\alpha\right|+x^2\left|\int_{-T}^T\sin(\sqrt{%
4n+3}\alpha)f(\alpha)d\alpha\right|\right] \\
&\leq \frac{1}{6\sqrt{(4n+1)(4n+3)}}\Big[T^2(|f(-T)|+|f(T)|)+\int_{-T}^T%
\left|\frac{d}{d\alpha}(f(\alpha)\alpha^2)\right|d\alpha +|x|(T(|f(-T)|+|f(T)|)\\
&+\int_{-T}^T\left|\frac{d}{\alpha}(f(\alpha)\alpha)%
\right|d\alpha) x^2\left(|f(-T)|+|f(T)|+\int_{-T}^T|f^{(1)}(\alpha)|d\alpha\right) \Big] \\
&\leq \frac{1}{6\sqrt{(4n+1)(4n+3)}}\Big[(x^2+|x|T+T^2)(|f(-T)|+|f(T)|)+%
\int_{-T}^T|f^{(1)}(\alpha)\alpha^2|d\alpha \Big] +2\int_{-T}^T|f(\alpha)\alpha|d\alpha \\
&+|x|\left(\int_{T}^T|f^{(1)}(\alpha)\alpha|d\alpha+\int|f(\alpha)|d\alpha%
\right) +x^2\int_{-T}^T|f^{(1)}|d\alpha
\end{align*}
Thus,
\begin{align*}
&\left[\frac{1}{2T}\int_{-T}^T \RN{1}(x)^2dx\right]^{1/2}\leq \frac{1}{6\sqrt{%
(4n+1)(4n+3)}}\Big[(1+\frac{1}{\sqrt 3}+\frac{1}{\sqrt 5})T^2(|f(-T)|+|f(T)|)
\\
&+\frac{T^2}{\sqrt 5}\int_{-T}^T \left|f^{(1)}(\alpha)\right| d \alpha+\frac{%
T}{\sqrt 3}\left(\int_{-T}^T\left| f^{(1)}(\alpha)\alpha \right|
d\alpha+\int_{-T}^T\left|f(\alpha)\right|d\alpha\right) +\int_{-T}^T\left|f^{(1)}(\alpha)\alpha^2\right|d\alpha+2\int_{-T}^T%
\left|f(\alpha)\alpha\right|d\alpha\Big].
\end{align*}
Next,
\begin{align*}
\RN{2}(x)&\leq \frac{1}{12N\sqrt{4n+1}}\left[\int_{-T}^T|\sin((x-%
\alpha)/2N)-1||f(\alpha)\alpha^3|d\alpha+\left|\int_{-T}^T
\sin(N(x+\alpha)f(\alpha)\alpha^3d\alpha\right|\right] \\
&\leq \frac{1}{12N\sqrt{4n+1}}\left[\int_{-T}^T\frac 16 \left(\frac{x-\alpha%
}{2N}\right)^3|f(\alpha)\alpha^3|d\alpha+\frac{2}{N}\int_{-T}^T \left|\frac{%
d(f(\alpha)\alpha^3)}{d\alpha}\right| d\alpha\right] \\
&\leq \frac{1}{288N^3\sqrt{4n+1}}\left[x^2\int_{-T}^T|f(\alpha)\alpha^3|d%
\alpha+2|x|\int_{-T}^T |f(\alpha)\alpha^4|d\alpha+\int_{-T}^T
|f(\alpha)\alpha^5|d\alpha\right] \\
&+\frac{1}{6N^{2}\sqrt{4n+1}}\left[T^3(|f(-T)|+|f(T)|)+\int_{-T}^T|f^{(1)}(%
\alpha)\alpha^3|d\alpha+3\int_{-T}^T |f(\alpha)\alpha^2|d\alpha\right].
\end{align*}
Hence,
\begin{align*}
\left[\frac{1}{2T}\int_{-T}^T\RN{2}(x)^2dx\right]^{1/2}& \leq \frac{1}{288N^3%
\sqrt{4n+1}}\left[\frac{T^2}{\sqrt 5}\int_{-T}^T|f(\alpha)\alpha^3|d\alpha+%
\frac{2T}{\sqrt 5}\int_{-T}^T |f(\alpha)\alpha^4|d\alpha+\int_{-T}^T
|f(\alpha)\alpha^5|d\alpha\right] \\
&+\frac{1}{6N^{2}\sqrt{4n+1}}\left[T^3(|f(-T)|+|f(T)|)+\int_{-T}^T|f^{(1)}(%
\alpha)\alpha^3|d\alpha+3\int_{-T}^T |f(\alpha)\alpha^2|d\alpha\right].
\end{align*}

Finally,
\begin{align*}
\RN{3}(x)&\leq \frac{1}{48N^2\sqrt{4n+1}}\Big[|x|^3\int_{-T}^T
|f(\alpha)\alpha^3|d \alpha
+3x^2\int_{-T}^T |f(\alpha)\alpha^4|d\alpha+\left|\int_{-T}^T
sinc(N(x-\alpha))f(\alpha)\alpha^4d\alpha\right|\Big] \\
&\leq \frac{1}{12\sqrt{4n+1}}\Big[|x|\int_{-T}^T \left((x+\alpha)/{2N^2}%
\right)|f(\alpha)\alpha^3|d \alpha+\int_{-T}^T \left((x+\alpha)/{2N}%
\right)^2|f(\alpha)\alpha^4|d\alpha \\
&+\frac{1}{12N\sqrt{4n+1}}\Big[T\left(\left|\int_{-T}^T \frac{%
\sin(N\alpha)f(\alpha)\alpha^3}{x-\alpha} d \alpha\right|+\left|\int_{-T}^T
\frac{\cos(N\alpha)f(\alpha)\alpha^3}{x-\alpha} d \alpha\right|\right) \\
&+\left|\int_{-T}^T \frac{\sin(N\alpha)f(\alpha)\alpha^4}{x-\alpha} d
\alpha\right|+\left|\int_{-T}^T \frac{\cos(N\alpha)f(\alpha)\alpha^4}{%
x-\alpha} d \alpha\right|\Big].
\end{align*}

%We observe that
%\begin{align*}
%&\leq\frac{1}{48N^2\sqrt{4n+1}}\Big[|x|^3\int_{-T}^T|f(\alpha)\alpha^3|d\alpha+3|x|^2\int_{-T}^T|f(\alpha)\alpha^4|d\alpha\\
%&+3|x|\int_{-T}^T|f(\alpha)\alpha^5|d\alpha+\int_{-T}^T|f(\alpha)\alpha^6|d\alpha\Big]\\
%&\frac{1}{12N\sqrt{4n+1}}\Big[T\left(\left|\int_{-T}^T\frac{\sin(N\alpha)f(\alpha)\alpha^3}{x-\alpha}d\alpha\right|+\left|\int_{-T}^T\frac{\cos(N\alpha)f(\alpha)\alpha^3}{x-\alpha}d\alpha\right|\right)\\
%7+\left|\int_{-T}^T\frac{\sin(N\alpha)f(\alpha)\alpha^4}{x-\alpha}d\alpha\right|+\left|\int_{-T}^T\frac{\cos(N\alpha)f(\alpha)\alpha^3}{x-\alpha}d\alpha\right|\Big].
%\end{align*}
%$k=3,4.$

Altogether, then,
\begin{align*}
&\left[\frac{1}{2T}\int_{-T}^T \RN{3}(x)^2dx\right]^{1/2}\leq \frac{1}{48N^2%
\sqrt{4n+1}}\Big[\frac{T^3}{\sqrt 7}\int_{-T}^T |f(\alpha)\alpha^3|d \alpha+%
\frac{3T^2}{\sqrt 5}\int_{-T}^T |f(\alpha)\alpha^4|d\alpha \\
&+\frac{3T}{\sqrt{3}}\int_{-T}^T |f(\alpha)\alpha^5|d\alpha+\int_{-T}^T
|f(\alpha)\alpha^6|d\alpha\Big] +\frac{1}{6N\sqrt{4n+1}}\left[T\left(\int_{-T}^T|f(\alpha)\alpha^3|^2d%
\alpha\right)^{1/2}+\left(\int_{-T}^T|f(\alpha)\alpha^4|^2d\alpha%
\right)^{1/2}\right].
\end{align*}

\bigskip

3.3
\begin{align*}
\left|\int_{-T}^T \frac{M_3^{(n)}(x, \alpha)}{x-\alpha}f(\alpha) d
\alpha\right|&\leq \frac{1}{\sqrt{4n+3}}\left|\int_{-T}^T \frac{\alpha^3-x^3%
}{6(x- \alpha)}\cos(\sqrt{4n+3} \alpha)f(\alpha) d\alpha\right| \\
&+ \frac{|x|^3}{\sqrt{4n+3}} \left|\int_{-T}^T {sinc((x- \alpha)/2N)}%
sin(N(x+\alpha))f(\alpha) \frac{1}{12N} d \alpha\right| \\
&+\frac{|x|^3}{\sqrt{4n+3}} \left|\int_{-T}^T {sinc((x+ \alpha)/2N)}sinc
(N(x-\alpha))f(\alpha) \frac{(x+\alpha)}{12} d \alpha\right| \\
&=I(x)+II(x)+III(x).
\end{align*}

Now, $I(x)$ here is, essentially, the same as the $I(x)$ involved in $%
M_2^{(n)}(x, \alpha)$, we find
\begin{align*}
&\RN{1}(x)\leq \frac{1}{6\sqrt{4n+3}}\Big[\left|\int_{-T}^T \cos(\sqrt{4n+3}
\alpha)f(\alpha)\alpha^{2}\, d \alpha\right|+2|x|\left|\int_{-T}^T\cos(\sqrt{%
4n+3}\alpha) f(\alpha)\alpha d\alpha\right| +x^2\left|\int_{-T}^T\cos(\sqrt{4n+3}\alpha)f(\alpha)d\alpha\right|\Big] \\
&\leq \frac{1}{6{(4n+3)}}\Big[{T^2(|f(-T)|+|f(T)|)}+\int_{-T}^T{%
\left|(f^{(1)}(\alpha)\alpha^2)\right|}d\alpha+2\int_{-T}^T{|f(\alpha)\alpha|%
} \\
&+|x|\left(T\frac{(|f(-T)|+|f(T)|)}{\sqrt{4n+3}}+\int_{-T}^T\frac{%
\left|(f^{(1)}(\alpha)\alpha)\right|}{\sqrt{4n+3}}d\alpha+\int_{-T}^T\frac{%
|f(\alpha)|}{\sqrt{4n+3}}d\alpha\right) \\
&+x^2\left(\frac{|f(-T)|+|f(T)|}{\sqrt{4n+3}}+\int_{-T}^T\frac{%
|f^{(1)}(\alpha)|}{\sqrt{4n+3}}d\alpha\right)\Big].
\end{align*}
Then,
\begin{align*}
&\left[\frac{1}{2T}\int_{-T}^T \RN{1}(x)^2dx\right]^{1/2}\leq \frac{1}{6{(4n+3)}}%
\Big[(1+\frac{1}{\sqrt 3}+\frac{1}{\sqrt 5})T^2(|f(-T)|+|f(T)|) +2\int_{-T}^T|f(\alpha)\alpha|d\alpha+\frac{T}{\sqrt 3}\int_{-T}^T|f(%
\alpha)|d\alpha \\
&+\frac{T^2}{\sqrt 5}\int_{-T}^T \left|f^{(1)}(\alpha)\right| d \alpha+\frac{%
T}{\sqrt 3}\int_{-T}^T\left| f^{(1)}(\alpha)\alpha \right| d\alpha +\int_{-T}^T\left|f^{(1)}(\alpha)\alpha^2\right|d\alpha\Big].
\end{align*}
Next,
\begin{align*}
\RN{2}(x)&\leq \frac{|x|^3}{12 N\sqrt{4n+3}}\left[\int_{-T}^T|sinc((x-%
\alpha)/2N)-1||f(\alpha)|d\alpha+\left|\int_{-T}^T
\sin(N(x+\alpha)f(\alpha)d\alpha\right|\right] \\
&\leq \frac{|x|^3}{12 N\sqrt{4n+3}}\left[\int_{-T}^T\frac{1}{6} \left(\frac{%
x-\alpha}{2N}\right)^2|f(\alpha)|d\alpha+\left[\left|\int_{-T}^T {%
\cos(N\alpha)f(\alpha)}d\alpha\right|+\left|\int_{-T}^T{\sin(N\alpha)f(%
\alpha)}d\alpha\right| \right]\right] \\
&\leq \frac{|x|^3}{12N\sqrt{4n+3}}\left[\int_{-T}^T\frac{1}{24N^2}%
(x-\alpha)^2|f(\alpha)|d\alpha+\frac
2N(|f(-T)|+|f(T)|)+\frac2N\int_{-T}^T|f^{(1)}(\alpha)|d\alpha\right] \\
&+\frac{|x|^3}{288N^3\sqrt{4n+3}}\left[x^2\int_{-T}^T|f(\alpha)|d\alpha+2|x|%
\int_{-T}^T|f(\alpha)\alpha|d\alpha+\int_{-T}^T|f(\alpha)\alpha^2|d\alpha%
\right]\\
&+\frac{|x|^3}{6N^2\sqrt{4n+3}}[|f(-T)|+|f(T)|+\int_{-T}^T|f^{(1)}(%
\alpha)|d\alpha].
\end{align*}
Hence,
\begin{align*}
&\left[\frac{1}{2T}\int_{-T}^T \RN{2}(x)^2dx\right]^{1/2}\leq \frac{T^3}{6\sqrt
7 N^2\sqrt{4n+3}}\Big[|f(-T)|+|f(T)|+\int_{-T}^T|f^{(1)}(\alpha)|d\alpha%
\Big] \\
&+\frac{T^3}{288 N^3 \sqrt{4n+3}}\left[\frac{1}{\sqrt 7}\int_{-T}^T|f(\alpha)%
\alpha^2|d\alpha+\frac{2T}{\sqrt 9}\int_{-T}^T|f(\alpha)\alpha|d\alpha+\frac{%
T^2}{\sqrt{11}}\int_{-T}^T|f(\alpha)|d\alpha\right].
\end{align*}

Finally, with $T_4(t)=1-\frac{t^2}{6}+\frac{t^4}{120}$,
\begin{align*}
\RN{3}(x)&\leq \frac{|x|^3}{12\sqrt{4n+3}}\Big[\int_{-T}^T
|sinc\left((x+\alpha)/{2N}\right)-T_4((x+\alpha)/2N)|f(\alpha)|[|x|+|%
\alpha|]d \alpha \\
&+\left|\int_{-T}^T
sinc(N(x-\alpha))T_4((x+\alpha)/2N)f(\alpha)(x+\alpha)d\alpha\right|\Big] \\
&\leq \frac{|x|^3}{12\sqrt{4n+3}}\Big[\int_{-T}^T \frac{1}{5040} \left(\frac{%
x+\alpha}{2N}\right)^6|f(\alpha)|[|x|+|\alpha|] \\
&+\left|\int_{-T}^T sinc(N(x-\alpha))\left(1-\frac{(x+\alpha)^2}{24N^2}+%
\frac{(x+\alpha)^4}{1920N^4}\right)f(\alpha)(x+\alpha) d\alpha\right|\Big] \\
&\leq \frac{1}{3870720N^6\sqrt{4n+3}}\Big[x^{10}\int_{-T}^T|f(\alpha)|d%
\alpha+7|x|^9\int_{-T}^T|f(\alpha)\alpha|d\alpha+21x^8\int_{-T}^T|f(\alpha)%
\alpha^2|d\alpha \\
&+35|x|^7\int_{-T}^T|f(\alpha)\alpha^3|d\alpha+35x^6\int_{-T}^T|f(\alpha)%
\alpha^4|d\alpha+21|x|^5\int_{-T}^T|f(\alpha)\alpha^5|d\alpha \\
&+7x^4\int_{-T}^T|f(\alpha)\alpha^6|d\alpha+|x|^3\int_{-T}^T|f(\alpha)%
\alpha^7|d\alpha\Big] \\
&+\frac{1}{12N\sqrt{4n+3}}\Big[|x|^4\left(\left|\int_{-T}^T \frac{%
\sin(N\alpha)f(\alpha)}{x-\alpha} d \alpha\right|+\left|\int_{-T}^T \frac{%
\cos(N\alpha)f(\alpha)}{x-\alpha} d \alpha\right|\right) \\
&+|x|^3\left(\left|\int_{-T}^T \frac{\sin(N\alpha)f(\alpha)\alpha}{x-\alpha}
d \alpha\right|+\left|\int_{-T}^T \frac{\cos(N\alpha)f(\alpha)\alpha}{%
x-\alpha} d \alpha\right|\right)\Big] \\
&+\frac{1}{288N^2\sqrt{4n+3}}|x|^3\int_{-T}^T \left((|x|+|\alpha|)^3+\frac{%
(|x|+|\alpha|)^5}{80N^2}\right)|f(\alpha)|d\alpha \\
&=\frac{1}{3870720N^6\sqrt{4n+3}}(IV(x))+\frac{1}{12N\sqrt{4n+3}}(V(x))+%
\frac{1}{288N^2\sqrt{4n+3}}(VI(x)).
\end{align*}

Altogether, then,
\begin{align*}
\left[\frac{1}{2T}\int_{-T}^T \RN{3}(x)^2dx\right]^{1/2}&\leq \frac{1}{3870720N^6%
\sqrt{4n+3}}\Big[\frac{1}{2T}\int_{-T}^T(IV)(x)^2dx\Big]^{1/2}+\frac{1}{12N%
\sqrt{4n+3}}\left[\int_{-T}^T\frac{1}{2T}\int_{-T}^TV(x)^2dx\right]^{1/2} \\
&+ \frac{1}{288N^2\sqrt{4n+3}}\left[\frac{1}{2T}\int_{-T}^T(VI)(x)^2dx\right]%
^{1/2},
\end{align*}
where
\begin{align*}
&\left[\frac{1}{2T}\int_{-T}^T(IV)(x)^2dx\right]^{1/2} \leq\frac{T^{10}}{\sqrt{21}}\int_{-T}^T|f(\alpha)|d\alpha+\frac{7T^9}{\sqrt{%
19}}\int_{-T}^T|f(\alpha)\alpha|d\alpha \\
&+\frac{21T^8}{\sqrt{17}}\int_{-T}^T|f(\alpha)\alpha^2|d\alpha+\frac{35T^7}{%
\sqrt{15}}\int_{-T}^T|f(\alpha)\alpha^3|d\alpha+\frac{35T^6}{\sqrt{13}}%
\int_{-T}^T|f(\alpha)\alpha^4|d\alpha \\
&+\frac{21T^5}{\sqrt{11}}\int_{-T}^T|f(\alpha)\alpha^5|d\alpha +\frac{7T^4}{%
\sqrt 9}\int_{-T}^T|f(\alpha)\alpha^6|d\alpha+\frac{T^3}{\sqrt{7}}%
\int_{-T}^T|f(\alpha)\alpha^7|d\alpha.
\end{align*}

Observe that, by Bedrosian's identity,
\begin{align*}
&\int_{-T}^T \frac{\sin(N\alpha)f(\alpha)}{x-\alpha}d\alpha=\int_{-\infty}^{%
\infty}\frac{\sin(N\alpha)f(\alpha)}{x-\alpha} d\alpha-\int_{-\infty}^{%
\infty}\sin(N\alpha)f(\alpha)\xi_{|\alpha|>T}(\alpha)d\alpha \\
&=\int_{-\infty}^{\infty}\frac{\sin(N\alpha)f_N(\alpha)}{x-\alpha}
d\alpha-\int_{-\infty}^{\infty}\frac{\sin(N\alpha)[f(\alpha)-f_N(\alpha)]}{%
x-\alpha}d\alpha -\int_{-\infty}^{\infty}\sin(N\alpha)f(\alpha)\xi_{|\alpha|>T}(\alpha)d%
\alpha \\
&=\pi \cos(Nx)f_N(x)+\int_{-\infty}^{\infty}\frac{\sin(N\alpha)[f(%
\alpha)-f_N(\alpha)]}{x-\alpha} d\alpha-\int_{-\infty}^{\infty}\frac{%
\sin(N\alpha)f(\alpha)\xi_{|\alpha|>T}(\alpha)}{x-\alpha}d\alpha,
\end{align*}
where $f_N=(\hat{f}\xi_{(-N,N)})^{\vee}$. A similar result holds with $%
\sin(N\alpha)$ replaced by $\cos(N\alpha)$. Thus,
\begin{align*}
&\left[\frac{1}{2T}\int_{-T}^TV(x)^2dx\right]^{1/2}\leq \frac{1}{\sqrt{2T}}%
\left[2\pi\left|\int_{-T}^T|f_N(\alpha)\alpha^4|\right|^2d\alpha\right]%
^{1/2}+2\pi T^4\left[\int_{-\infty}^{\infty}|f(\alpha)-f_N(\alpha)|^2d\alpha%
\right]^{1/2} \\
&+2\pi T^4\left[\int_{|\alpha|>T}f(\alpha)^2dx\right]^{1/2}+2\pi T^3\left[%
\int_{-T}^T|f(\alpha)\alpha|^2d\alpha\right]^{1/2} \\
&\leq \sqrt 2 \pi \Big[T^{-1/2}\left[\int_{-T}^T|f_N(\alpha)\alpha^4|^2d%
\alpha\right]^{1/2}+T^{5/2}\left[\int_{-T}^T|f(\alpha)\alpha|^2d\alpha\right]%
^{1/2} \\
&+T^{7/2}\left[\int_{|\alpha|>T}f(\alpha)^2d\alpha\right]^{1/2}+T^{7/2}\left[%
\int_{|\omega|>N}|\hat{f}(\omega)|^2d\omega\right]^{1/2}\Big],
\end{align*}
since
\begin{align*}
\int_{-\infty}^{\infty}|f(\alpha)-f_N(\alpha)|^2d\alpha&=\int_{-\infty}^{%
\infty}|\hat{f}(\omega)-\hat{f}_N(\omega)|^2d\omega =\int_{-\infty}^{\infty}|\hat f(\omega)-\hat{f}(\omega)\xi_{(-N,N)}(%
\omega)|^2d\omega =\int_{|\omega|>N}|\hat{f}(\omega)|^2d\omega.
\end{align*}

Again,
\begin{equation*}
\Big[\frac{1}{2T}\int_{-T}^T \RN{4}(x)^2dx\Big]^{1/2}\leq \left[\frac{1}{2T}%
\int_{-T}^T\left|\int_{-T}^T|x|^3\left(8(|x|^3+|\alpha|^3)+\frac{%
2(|x|^5+|\alpha|^5)}{5N^2}\right)|f(\alpha)|d\alpha\right|^2dx\right]^{1/2}.
\end{equation*}

Expanding the integrand on the right hand side of the last inequality, we
find
\begin{align*}
&\Big[\frac{1}{2T}\int_{-T}^T \RN{4}(x)^2dx\Big]^{1/2}\leq \frac{1}{\sqrt{2T}}%
\Big[8\left[\int_{-T}^Tx^{12}dx\right]^{1/2}\left[\int_{-T}^T|f(\alpha)d%
\alpha|\right]+ 8\left[\int_{-T}^Tx^{6}dx\right]^{1/2}\left[\int_{-T}^T|f(\alpha)\alpha^3d%
\alpha|\right] \\
&+\frac{2}{5N^2}\left[\int_{-T}^Tx^{16}dx\right]^{1/2}\left[%
\int_{-T}^T|f(\alpha)|d\alpha\right] +\frac{2}{5N^2}\left[\int_{-T}^Tx^{6}dx\right]^{1/2}\int_{-T}^T|f(\alpha)%
\alpha^5|d\alpha\Big] \\
&=\frac{8}{\sqrt{13}}T^6\int_{-T}^T|f(\alpha)|d\alpha+\frac{8}{\sqrt{7}}%
T^3\int_{-T}^T|f(\alpha)\alpha^3|d\alpha+\frac{2}{5N^2\sqrt{17}}%
T^8\int_{-T}^T|f(\alpha)|d\alpha +\frac{2}{5N^2}\frac{T^3}{\sqrt{7}}\int_{-T}^T|f(\alpha)\alpha^5|d\alpha.
\end{align*}

%+++++++++++++++++++++++++++++++++++++++++++++++++++++++++++++++++++++++++++++++++++++++++++++++++++++++++++++++++++++++++
%+++++++++++++++++++++++++++++++++++++++++++++++++++++++++++++++++++++++++++++++++++++++++++++++++++++++++++++++++++++++++
%+++++++++++++++++++++++++++++++++++++++++++++++++++++++++++++++++++++++++++++++++++++++++++++++++++++++++++++++++++++++++
%+++++++++++++++++++++++++++++++++++++++++++++++++++++++++++++++++++++++++++++++++++++++++++++++++++++++++++++++++++++++++
%+++++++++++++++++++++++++++++++++++++++++++++++++++++++++++++++++++++++++++++++++++++++++++++++++++++++++++++++++++++++++
\bigskip

3.4
\begin{align*}
 &\left|\int_{-T}^T \frac{M_4^{(n)}(x, \alpha)}{x- \alpha}f(\alpha)
d\alpha\right| \leq \frac{|x|^3}{36\sqrt{(4n+1)(4n+3)}}\Big[\left|\int_{-T}^T
\sin (N(x-\alpha))\cos((x+\alpha)/2N)\frac{f(\alpha)\alpha^3}{x-\alpha}d
\alpha \right| \\
&+\left|\int_{-T}^T \sin((x-\alpha)/2N)\cos (N(x+\alpha))\frac{%
f(\alpha)\alpha^3}{x-\alpha}d \alpha\right|\Big] \leq \frac{T^3}{18\sqrt{(4n+1)(4n+3)}}\Big[\left|\int_{-T}^T%
\frac{\cos(N\alpha)\cos(\alpha/2N)f(\alpha)\alpha^3}{x-\alpha}d \alpha
\right| \\
&+\left|\int_{-T}^T\frac{\cos(N\alpha)\sin(\alpha/2N)f(\alpha)%
\alpha^3}{x-\alpha}d \alpha \right| +\left|\int_{-T}^T\frac{\sin(N\alpha)\cos(\alpha/2N)f(\alpha)\alpha^3}{%
x-\alpha}d \alpha \right| +\left|\int_{-T}^T\frac{\sin(N\alpha)\sin(\alpha/2N)f(\alpha)%
\alpha^3}{x-\alpha}d \alpha \right|\Big].
\end{align*}

Hence,
\begin{align*}
\left[\frac{1}{2T}\int_{-T}^T\left|\int_{-T}^T \frac{M_4^{(n)}(x, \alpha)}{%
x- \alpha}f(\alpha) d\alpha\right|^2dx\right]^{1/2} &\leq \frac{\sqrt 2 \pi
T^{5/2}}{9\sqrt{(4n+1)(4n+3)}}\left[\int_{-T}^T|f(\alpha)\alpha^3|^2d\alpha%
\right]^{1/2}.
\end{align*}

%++++++++++++++++++++=================================================================
%++++++++++++++++++++=================================================================
%++++++++++++++++++++=================================================================
%++++++++++++++++++++=================================================================
%++++++++++++++++++++=================================================================

%++++++++++++++++++++=================================================================

\bigskip

3.5

 The expression
\begin{align*}
\left[\frac{1}{2T}\int_{-T}^T\left|\int_{-T}^T\frac{M_5^{(n)}(x, \alpha)}{%
x-\alpha}f(\alpha) d \alpha\right|^2dx\right]^{1/2}
\end{align*}
is dominated by the sum of five terms, which we now consider in turn.

\medskip

(i) The term
\begin{align*}
\left[\frac{1}{2T}\int_{-T}^T\left|\int_{-T}^T a_n {[T(2n+1, x)\cos(\sqrt{%
4n+1} \alpha)-T(2n+1, \alpha)\cos(\sqrt{4n+1} x)]} \frac{f(\alpha)}{x-\alpha}%
d \alpha \right|^2\, dx\right]^{1/2}
\end{align*}
is no bigger than
\begin{align*}
&\frac{|a_n|}{\sqrt{{2T}}}\left[\int_{-T}^T\left|T(2n+1,x)\int_{-T}^T\frac{%
\cos(\sqrt{4n+1}\alpha)f(\alpha)}{x-\alpha}d \alpha\right|^2 dx\right]^{1/2}
+\frac{|a_n|}{\sqrt{{2T}}}\left[\int_{-T}^T\left|\int_{-T}^T\frac{T(2n+1,
\alpha)f(\alpha)}{x-\alpha}d \alpha\right|^2 dx\right]^{1/2} &  \\
&\leq \frac{|a_n|\pi}{\sqrt{{2T}}}\left[%
2\int_{-T}^T|T(2n+1,x)f_N(x)|^2dx\right]^{1/2}+sup_{|x|\leq
T}|T(2n+1,x)|\left(\left(\int_{|\alpha|>T}f(\alpha)^2d\alpha\right)^{1/2}+%
\left(\int_{|\omega|>N}|\hat{f}(\omega)|^2d\omega\right)^{1/2}\right) &  \\
&\leq 4\sqrt{\frac{\pi}{2T}}\sqrt[4]{\frac 32}\frac 1n \Big[%
2\left(\int_{-T}^T|f_N(\alpha)\omega(\alpha)|^2d\alpha\right)^{1/2} +\omega(T)\Big[\left(\int_{|\alpha|>T}f(\alpha)^2d\alpha\right)^{1/2}+%
\left(\int_{|\omega|>N}|\hat{f}(\omega)|^2d\omega\right)^{1/2}\Big]\Big], & +%
\frac{|a_n|}{\sqrt{{2T}}} \left[\int_{-T}^T \left| \int_{-T}^T\frac{T(2n+1,
\alpha)f(\alpha)}{x-\alpha}d\alpha\right|^2 \, dx\right]^{1/2}
\end{align*}
in which
\begin{equation*}
\omega(\alpha)=\alpha^2\left(\frac{\alpha^4}{18}+1\right)\frac{1}{\pi^{1/2}}+%
\frac{2}{187}\frac{|\alpha|^{17/2}}{n^{1/4}}.
\end{equation*}

\medskip

(ii) Arguing as in (i) we have
\begin{align*}
&\left[\frac{1}{2T}\int_{-T}^T\left|\int_{-T}^Ta_n\left[T(2n+1,x)\frac{%
\alpha^3}{6}\frac{\sin(\sqrt{4n+1}\alpha)}{\sqrt{4n+1}}-T(2n+1, \alpha)\frac{%
x^3}{6}\frac{\sin(\sqrt{4n+1},x)}{\sqrt{4n+1}}\right]\frac{f(\alpha)}{%
x-\alpha}d \alpha\right|^2dx\right]^{1/2} \\
&\leq \frac{|a_n|}{6\sqrt{4n+1}\sqrt{2T}}\Big[\left[\int_{-T}^T%
\left|T(2n+1,x)\int_{-T}^T\frac{\sin(\sqrt{4n+1}\alpha)f(\alpha)\alpha^3}{%
x-\alpha}d\alpha\right|^2dx\right]^{1/2} +\pi T^{3}\left[\int_{-T}^T|{T(2n+1,\alpha)f(\alpha)}|^2d\alpha\right]^{1/2}%
\Big] \\
& \leq \frac{|a_n|}{6\sqrt{4n+1}\sqrt{2T}}\Big(\left[\int_{-T}^T%
\left|T(2n+1, x)\int_{-T}^T\frac{\sin(\sqrt{4n+1}\alpha)f(\alpha)\alpha^3}{%
x-\alpha}\right|^2dx\right]^{1/2}+\pi T^3\pi T^{3}\left[\int_{-T}^T|{%
T(2n+1,\alpha)f(\alpha)}|^2d\alpha\right]^{1/2}\Big) \\
&\leq \frac 23 \sqrt{\frac{\pi}{2T}}\sqrt[4]{\frac 32}\frac{1}{n\sqrt{4n+1}}%
\left[\omega(T)\left[\int_{-T}^T|f(\alpha)\alpha^3|^2d\alpha\right]^{1/2}+T^3%
\left[\int_{-T}^T|f(\alpha)\omega(\alpha)|^2d\alpha\right]^{1/2}\right].
\end{align*}

\medskip

(iii) The mean square on $I_T$ of
\begin{equation*}
\int_{-T}^Tb_n\frac{T(2n, x)\sin(\sqrt{4n+3}\alpha)-T(2n, \alpha)\sin(\sqrt{%
4n+3}x)}{x-\alpha}f(\alpha)d\alpha
\end{equation*}
is dominated by
\begin{align*}
&\frac{|b_n|}{\sqrt{2T}}\left[\left[\int_{-T}^T\left|T(2n,x)\int_{-T}^T\frac{%
\sin(\sqrt{4n+3}\alpha)f(\alpha)}{x-\alpha}d\alpha\right|^2\right]^{1/2} +%
\left[\int_{-T}^T\left|\int_{-T}^T\frac{T(2n,\alpha)f(\alpha)}{x-\alpha}%
d\alpha\right|^2\right]^{1/2}\right] \\
&\leq \frac{|b_n|\pi}{\sqrt{2T}}\left[2\left(\int_{-T}^T|T(2n,x)f_N(x)|^2%
\right)^{1/2}+sup_{|x|\leq T}|T(2n,x)|\left[\left(\int_{|\alpha|>T}f(%
\alpha)^2d\alpha\right)^{1/2}+\int_{|\omega|>N}|\hat{f}(\omega)|^2d\omega%
\right)^{1/2}\right] \\
&\leq \frac{1}{24}\sqrt{\frac{\pi^3}{2T}}\sqrt[4]{\frac 32} \frac 1n\left[2%
\left[\int_{-T}^T|f_N(\alpha)\omega(\alpha)|^2d\alpha\right]^{1/2}+\omega(T)%
\left[\left(\int_{|\alpha|>T}f(\alpha)^2d\alpha\right)^{1/2}+\left(\int_{|%
\omega|>N}|\hat{f}(\omega)|^2d\omega\right)^{1/2}\right]\right].
\end{align*}

\medskip

(iv) The method of (ii) applied to the estimation of the square mean on $I_T$%
, of
\begin{equation*}
\int_{-T}^Tb_n\left[-T(2n, \alpha)\frac{x^3}{6}\frac{\cos(\sqrt{4n+3}x)}{%
\sqrt{4n+3}}+T(2n, x)\frac{\alpha^3}{6}\frac{\cos(\sqrt{4n+3}\alpha)}{\sqrt{%
4n+3}}\right]\frac{f(\alpha)}{x-\alpha}d \alpha
\end{equation*}
leads to the upper bound
\begin{align*}
&\frac{1}{\sqrt{2T}}\frac{|b_n|}{6\sqrt{4n+3}}\sqrt[4]{\frac 1n}\left[%
\omega(T)\left[\int_{-T}^T|f(\alpha)\alpha^3|^2d\alpha\right]^2\right]%
^{1/2}+T^3\left[\int_{-T}^T|f(\alpha)\omega(\alpha)|^2d\alpha\right]^{1/2} \\
&\leq \frac{1}{24}\sqrt{\frac{\pi^3}{2T}}\sqrt[4]{\frac 32}\frac{1}{n\sqrt{%
4n+3}}\left[\omega(T)\left[\int_{-T}^T|f(\alpha)\alpha^3|^2d\alpha\right]^2%
\right]^{1/2}+T^3\left[\int_{-T}^T|f(\alpha)\omega(\alpha)|^2d\alpha\right]%
^{1/2}.
\end{align*}

\medskip

(v) The square mean, on $I_T$, of
\begin{equation*}
\int_{-T}^Ta_nb_n\left[\frac{T(2n+1, x)T(2n, \alpha)-T(2n+1, \alpha)T(2n, x)%
}{x-\alpha}\right]f(\alpha)d\alpha
\end{equation*}
is, by a now familiar argument,
\begin{align*}
&\leq \frac{|a_n||b_n|}{\sqrt{2T}}\Big[\left[\int_{-T}^T\left|T(2n+1,x)%
\int_{-T}^T\frac{T(2n, \alpha)f(\alpha)}{x-\alpha}d\alpha\right|^2dx\right]%
^{1/2} +\left[\int_{-T}^T\left|T(2n,x)\int_{-T}^T\frac{T(2n+1, \alpha)f(\alpha)}{%
x-\alpha}dx\right|^2d\alpha\right]^{1/2}\Big] \\
&\leq \frac{2\pi|a_n||b_n|}{\sqrt{2T}}\omega(T)\left[%
\int_{-T}^T|f(\alpha)\omega(\alpha)|^2d\alpha\right]^{1/2} \leq \sqrt{\frac 3T}\frac{\pi}{n^2}\omega(T)\left[\int_{-T}^T|f(\alpha)%
\omega(\alpha)|^2d\alpha\right]^{1/2}.
\end{align*}

\section{The proof of the Theorem 1}

By (2),(3), and (4) one has
\begin{equation*}
(S_Kf)(x)=-C^{(n)}\left(F_Nf_T(x)+\sum_{k=1}^5 \int_{-T}^T M_k^{(n)}(x, \alpha) \frac{f_T(\alpha)}{x-\alpha}d \alpha\right),
\end{equation*}
in which
\begin{equation*}
-C^{(n)}=\frac{1}{\pi}\left(1+\frac{\varepsilon}{12 n}\right), \quad
|\varepsilon|<3.
\end{equation*}
%and
%\begin{equation*}
%\frac{M_1^{(n)}(x, \alpha)}{x-\alpha}=\frac{\sin N (x - \alpha)}{x- \alpha}+%
%\frac{T_1^{(n)}(x, \alpha)}{N}.
%\end{equation*}

Thus, by Lemma 2 and (4),
\begin{align*}
&\left[\frac{1}{2T}\int_{-T}^T|f(x)-(S_{K}f)(x)|^2 dx\right]^{1/2}\leq \left[\frac{1}{2T}\int_{-T}^T
|f(x)-F_n(x)|^2 dx\right]^{1/2}  +\frac{1}{2  \pi K} \left[\frac{1}{2T}\int_{-T}^T \left|\int_{-T}^T \frac{\sin
(N(x- \alpha))}{x-\alpha}f(\alpha)d \alpha\right|^2 dx\right]^{1/2} + \frac{1}{{\pi}}\left(1+ \frac{1}{2 K}\right)S_a(K,T)\\
&\leq \left[\frac{1}{2T}\int_{|\alpha|>T}f(x)^2 dx\right]^{1/2}+\left[\frac{1}{2T}\int_{|\omega|>N}|\hat f(\omega)|^2 d\omega\right]^{1/2}+\frac{1}{K\sqrt{2T}}\Bigg(\left[\int_{|\alpha|<T}|f_N(\alpha)|^2 d\alpha\right]^{1/2}+\left[\int_{|\alpha|>T}|f(\alpha)|^2 d\alpha\right]^{1/2}\\
&+\left[\int_{|\omega|>N}|\hat f(\omega)|^2 d\omega\right]^{1/2}\Bigg)+\frac{1}{\pi}\left(1+\frac{1}{2K}\right)S_a(K,T)\\
&\leq \left(1+\frac 1K \right)\Bigg(\frac{1}{2T}\left[\int_{|\alpha|>T}f(\alpha)^2 d\alpha\right]^{1/2}+\left[\frac{1}{2T}\int_{|\omega|>N}|\hat f(\omega)|^2 d\omega\right]^{1/2}\Bigg)+\frac 1K\left[\frac{1}{2T}\int_{|\alpha|<T}|f_N(\alpha)|^2 d\alpha\right]^{1/2}+\frac{1}{\pi}\left(1+\frac{1}{2K}\right)S_a(K,T)
\end{align*}
where, once again,
\begin{align}
&S_a(K,T)=\sum_{k=1}^5 \left[\frac{1}{2T}\int_{-T}^T \left|\int_{-T}^T\frac{M_k^{(n)}(x, \alpha)}{x-\alpha}f(\alpha)d\alpha\right|^2dx\right]^{1/2}. \quad \Box
\end{align}

An explicit estimate of $S_a(K,T)$ is described in the appendix using the ones involving
 $M_k^{(n)}(x, \alpha), \, k=1, \ldots, 5$ in Section 3.

\section{An Example}

Example 1 of [KHB] involved the Hermite series approximation of the trimodal
density function

\begin{equation*}
f(t)=0.5\phi (t)+3\phi (10(t-0.8))+2\phi (10(t-1.2)),
\end{equation*}%
in which
\begin{equation*}
\phi (t)=\frac{1}{\sqrt{2\pi }}e^{-t^{2}/2},\quad t\in {\mathbb{R}},
\end{equation*}%
is the standard normal density. Figure 1 above shows $f$ is essentially
supported in $[-3,3]$. Again, from the graph of $|\hat{f}|$ in Figure 4(c)
of [KHB] we see \textit{it} effectively lives in $[-8,8]$.

\begin{center}
\begin{figure}
  \includegraphics[width=.5\textwidth]{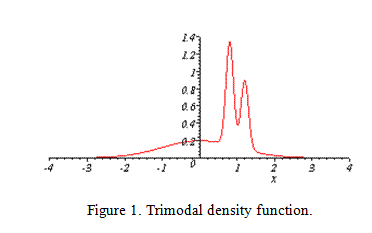}
 \end{figure}

\end{center}

Taking $T=3$ and $n=250$ (so $N=31.6544$) $K=500$, we obtain

\begin{equation}
\left[ \frac{1}{6}\int_{|t|<3}|f(t)-(S_{500}f)(t)|^{2}dt\right]
^{1/2}<0.02361.
\end{equation}

One always has
\begin{equation*}
\left[ \frac{1}{2T}\int_{|t|<T}|g(t)-(S_{K}g)(t)|^{2}dt\right] ^{1/2}\leq
sup_{|t|<T}|g(t)-(S_{K}g)(t)|,
\end{equation*}%
so, if the supremum norm is rather large, the smaller root mean square norm
gives a better measure of the average size of $|g(t)-(S_{K}g)(t)|$. In our
case
\begin{equation}
sup_{|t|<3}|f(t)-(S_{500}f)(t)|<0.0025.
\end{equation}%
Therefore, the supremum norm is \textit{here} the better measure.
Nevertheless, it is the computable estimates giving (6) that lead us  to
Figure 2 and hence to (7).

We observe that the graph in Figure 2 is of the error function $f-S_{500}f$ approximated by $f-S_{40}f-\sum_{k=41}^{500}\langle f, d_k\rangle d_k$, where $d_k$ is the Dominici approximation to $b_k$ given in Theorem 1.1 of [KHB].

\begin{center}

\begin{figure}
  \includegraphics[width=.8\linewidth]{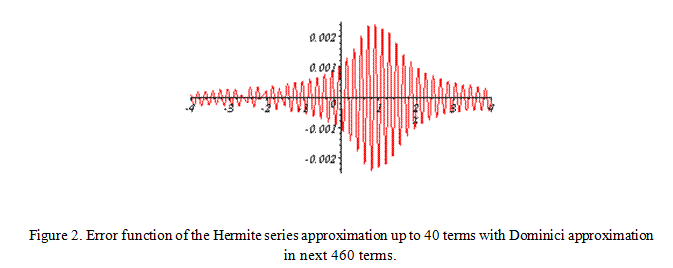}
 \end{figure}
\end{center}

\begin{figure}
  \includegraphics[width=.4\textwidth]{1.png}
  \centering
 \end{figure}

The term involving $S_{a}(500,3)$ in (6) makes the biggest contribution to
the upper bound in (1). Thus,
\begin{equation*}
1.002\left[ \frac{1}{6}\int_{|t|>3}f(t)^{2}dt\right] ^{1/2}<0.00051,
\end{equation*}%
\begin{equation*}
1.002\left[ \frac{1}{6}\int_{|\omega |>31.6544}|\hat{f}(\omega )|^{2}d\omega %
\right] ^{1/2}<0.00088
\end{equation*}%
and
\begin{equation*}
\frac{1}{500}\left[ \frac{1}{6}\int_{|t|<3}f(t)^{2}dt\right] ^{1/2}<0.00062,
\end{equation*}%
while
\begin{equation*}
\frac{1}{\pi }\left[ 1+\frac{1}{1000}\right] S_{a}(500,3)<0.02161.
\end{equation*}%
For the convenience of the reader we have gathered together in an appendix
the terms that make up $S_{a}(n,T)$.

\bigskip

\appendix
\begin{center}
 \textbf{Appendix}
\end{center}

\bigskip

Take the indicated multiples of the terms $\int_{-T}^{T}|f(\alpha )|d\alpha $
etc. and then add them to get an estimate  of the Sansone sum, $S_a(K,T)$ in formula (6).
%\begin{equation*}
%\left[ \frac{1}{2T}\int_{-T}^{T}\left\vert \int_{-T}^{T}\frac{%
%T_{1}^{(n)}(x,\alpha )f(\alpha )}{N}d\alpha \right\vert ^{2}dx\right]
%^{1/2}+\sum_{k=2}^{5}\left[ \frac{1}{2T}\int_{-T}^{T}\left\vert \int_{-T}^{T}%
%\frac{M_{k}^{(n)}(x,\alpha )f(\alpha )}{x-\alpha }d\alpha \right\vert ^{2}dx%
%\right] ^{1/2}.
%\end{equation*}

\begin{align*}
\int_{-T}^{T}|f(\alpha )|d\alpha & :\frac{T^{2}}{48\sqrt{5}N^{3}}+\frac{T^{3}%
}{{384\sqrt{9}}N^{4}}+\frac{1}{8N^{3}}+\frac{T}{6\sqrt{3}\sqrt{(4n+1)(4n+3)}}%
+\frac{T}{6\sqrt{3}(4n+3)} \\
& +\frac{T^{5}}{288\sqrt{11}N^{3}\sqrt{4n+3}}+\frac{T^{10}}{3870720\sqrt{21}%
\sqrt{4n+3}N^{6}}+\frac{T^{6}}{36\sqrt{13}N^{2}\sqrt{4n+3}}+\frac{T^{8}}{720%
\sqrt{17}N^{4}\sqrt{4n+3}};
\end{align*}

\begin{align*}
\int_{-T}^{T}|f(\alpha )\alpha |d\alpha & :\frac{T}{48\sqrt{3}N^{3}}+\frac{%
3T^{2}}{{128\sqrt{5}}N^{4}}+\frac{1}{3(4n+3)}+\frac{1}{3\sqrt{(4n+1)(4n+3)}}
\\
& +\frac{T^{4}}{144\sqrt{9}N^{3}\sqrt{4n+3}}+\frac{7T^{9}}{3870720\sqrt{19}%
\sqrt{4n+3}N^{6}};
\end{align*}

\begin{align*}
\int_{-T}^{T}|f(\alpha )\alpha ^{2}|d\alpha & :\frac{1}{48N^{3}}+\frac{T}{{%
128\sqrt{3}}N^{4}}+\frac{1}{2N^{2}\sqrt{(4n+1)}}+\frac{T^{3}}{288\sqrt{%
7(4n+1)}N^{3}} \\
& +\frac{21T^{8}}{3870720\sqrt{17}\sqrt{4n+3}N^{6}};
\end{align*}

\begin{align*}
\int_{-T}^{T}|f(\alpha )\alpha ^{3}|d\alpha & :\frac{1}{384N^{4}}+\frac{T^{3}%
}{{48\sqrt{7}}N^{2}\sqrt{4n+1}}+\frac{1}{2N^{2}\sqrt{(4n+3)}}+\frac{T^{2}}{%
288\sqrt{5(4n+1)}N^{3}} \\
& +\frac{35T^{7}}{3870720\sqrt{15}\sqrt{4n+3}N^{6}}+\frac{T^{3}}{36\sqrt{7}%
N^{2}\sqrt{4n+3}};
\end{align*}

\begin{equation*}
\int_{-T}^{T}|f(\alpha )\alpha ^{4}|d\alpha :\frac{T}{144\sqrt{5}N^{3}\sqrt{%
4n+1}}+\frac{T^{2}}{{16\sqrt{5}}N^{2}\sqrt{4n+1}}+\frac{35T^{6}}{3870720%
\sqrt{13}N^{6}\sqrt{(4n+3)}};
\end{equation*}

\begin{equation*}
\int_{-T}^{T}|f(\alpha )\alpha ^{5}|d\alpha :\frac{1}{288N^{3}\sqrt{4n+1}}+%
\frac{T}{{16\sqrt{3}}N^{2}\sqrt{4n+1}}+\frac{21T^{5}}{3870720\sqrt{11}N^{6}%
\sqrt{(4n+3)}}+\frac{T^{3}}{720\sqrt{7(4n+3)}N^{4}};
\end{equation*}

\begin{equation*}
\int_{-T}^{T}|f(\alpha )\alpha ^{6}|d\alpha :\frac{1}{48N^{2}\sqrt{4n+1}}+%
\frac{7T^{4}}{3870720\sqrt{9}\sqrt{4n+3}N^{6}};
\end{equation*}

\begin{equation*}
\int_{-T}^{T}|f(\alpha )\alpha ^{7}|d\alpha :\frac{T^{3}}{3870720\sqrt{7}%
\sqrt{4n+3}N^{6}};
\end{equation*}

\begin{equation*}
\int_{-T}^{T}|f^{(1)}(\alpha )|d\alpha :\frac{1}{2N^{2}}+\frac{1}{4N^{3}}+%
\frac{T^{2}}{6\sqrt{5}\sqrt{(4n+1)(4n+3)}}+\frac{T^{2}}{6\sqrt{5}(4n+3)};
\end{equation*}

\begin{equation*}
\int_{-T}^{T}|f^{(1)}(\alpha )\alpha |d\alpha :\frac{T}{6\sqrt{3}\sqrt{%
(4n+1)(4n+3)}}+\frac{T}{6\sqrt{3}(4n+3)}+\frac{T^{3}}{6\sqrt{7}N^{2}\sqrt{%
4n+3}};
\end{equation*}

\begin{equation*}
\int_{-T}^{T}|f^{(1)}(\alpha )\alpha ^{2}|d\alpha :\frac{1}{6\sqrt{%
(4n+1)(4n+3)}}+\frac{1}{6(4n+3)};
\end{equation*}

\begin{equation*}
\int_{-T}^{T}|f^{(1)}(\alpha )\alpha ^{3}|d\alpha :\frac{1}{6\sqrt{(4n+1)}%
N^{2}};
\end{equation*}

\begin{equation*}
\omega (\alpha )=\alpha ^{2}\left( \frac{\alpha ^{4}}{18}+1\right) \frac{1}{%
\pi ^{1/2}}+\frac{2}{187}\frac{|\alpha |^{17/2}}{n^{1/4}},
\end{equation*}%

\begin{equation*}
\left[ \int_{-T}^{T}|f(\alpha )\alpha |^{2}d\alpha \right] ^{1/2}:\frac{\pi
T^{5/2}}{6\sqrt{2}N\sqrt{4n+3}}+\frac{\pi T^{1/2}}{4N^2};
\end{equation*}%

\begin{equation*}
\left[ \int_{-T}^{T}|f(\alpha )\alpha^{2} |^{2}d\alpha \right] ^{1/2}:\frac{\sqrt 2 \pi
}{8N^2}T^{-1/2};
\end{equation*}%

\begin{align*}
\left[ \int_{-T}^{T}|f^{{}}(\alpha )\alpha ^{3}|^{2}d\alpha \right] ^{1/2}& :%
\frac{\pi T^{1/2}}{6\sqrt{2(4n+1)}N}+\frac{\sqrt{2}\pi T^{5/2}}{9\sqrt{%
(4n+1)(4n+3)}} \\
& +\frac{2}{3}\sqrt{\frac{\pi }{2}}\left( \frac{3}{2}\right) ^{1/4}\frac{%
T^{-1/2}\omega (T)}{n\sqrt{4n+1}}+\frac{1}{24}\sqrt{\frac{\pi ^{3}}{2}}%
\left( \frac{3}{2}\right) ^{1/4}\frac{T^{-1/2}\omega (T)}{n\sqrt{4n+3}};
\end{align*}

\begin{equation*}
\left[ \int_{-T}^{T}f_N(\alpha )^{2}d\alpha \right] ^{1/2}:\frac{\sqrt 2 \pi
}{8N^2}T^{3/2};
\end{equation*}%

\begin{equation*}
\left[ \int_{-T}^{T}|f_{N}(\alpha )\alpha ^{4}|^{2}d\alpha \right] ^{1/2}:%
\frac{\sqrt{2}\pi T^{-1/2}}{12N\sqrt{4n+3}},
\end{equation*}%
where $f_{N}=(\hat{f}\xi _{(-N,N)})^{\vee }=\frac{1}{\pi }\int_{-\infty
}^{\infty }\frac{\sin N(t-s)}{t-s}f(s)ds$;

%For purposes of calculation $f_N$ maybe replaced by $f$.

\begin{align*}
\left[ \int_{-T}^{T}|f(\alpha )\omega (\alpha )|^{2}d\alpha \right] ^{1/2}& :%
\frac{\sqrt{2}}{3}\pi ^{1/2}T^{5/2}\left( \frac{3}{2}\right) ^{1/4}\frac{1}{n%
\sqrt{4n+1}} \\
& +\frac{1}{24\sqrt{2}}\pi ^{3/2}T^{5/2}\left( \frac{3}{2}\right) ^{1/4}%
\frac{1}{n\sqrt{4n+3}}+\sqrt{2}\pi T^{-1/2}\frac{1}{n^{2}}\omega (T);
\end{align*}

\begin{equation*}
\left[ \int_{-T}^{T}|f_{N}(\alpha )\omega (\alpha )|^{2}d\alpha \right]
^{1/2}:4\sqrt{2}\pi ^{1/2}T^{-1/2}\left( \frac{3}{2}\right) ^{1/4}\frac{1}{n}%
+\frac{1}{12\sqrt{2}}\pi ^{3/2}T^{5/2}\left( \frac{3}{2}\right) ^{1/4}\frac{1%
}{n};
\end{equation*}

\begin{align*}
|f(-T)|+|f(T)|& :\frac{1}{2N^{2}}+\frac{T}{8N^{3}}\left( 1+\frac{1}{\sqrt{3}}%
\right) +\frac{T^{2}}{6\sqrt{(4n+1)(4n+3)}}\left( 1+\frac{1}{\sqrt{3}}+\frac{%
1}{\sqrt{5}}\right)  \\
& +\frac{T^{3}}{6N^{2}\sqrt{4n+1}}+\frac{T^{2}}{6{(4n+3)}}\left( 1+\frac{1}{%
\sqrt{3}}+\frac{1}{\sqrt{5}}\right) +\frac{T^{3}}{6\sqrt{7}N^{2}\sqrt{4n+3}};
\end{align*}

\begin{align*}
\left[ \int_{|\alpha |>T}|f(\alpha )^{2}|d\alpha \right] ^{1/2}& :2\sqrt{2}%
\pi ^{1/2}T^{-1/2}\left( \frac{3}{2}\right) ^{1/4}\frac{\omega (T)}{n}+\frac{%
\sqrt{2}\pi T^{7/2}}{12N\sqrt{4n+3}} \\
& +\frac{1}{24\sqrt{2}}\pi ^{3/2}T^{-1/2}\left( \frac{3}{2}\right) ^{1/4}%
\frac{\omega (T)}{n};
\end{align*}

\begin{align*}
\left[ \int_{|\omega |>N}|\hat{f}(\omega )^{2}|d\omega \right] ^{1/2}& :2%
\sqrt{2}\pi ^{1/2}T^{-1/2}\left( \frac{3}{2}\right) ^{1/4}\frac{\omega (T)}{n%
}+\frac{\sqrt{2}\pi T^{7/2}}{12N\sqrt{4n+3}} \\
& +\frac{1}{24\sqrt{2}}\pi ^{3/2}T^{-1/2}\left( \frac{3}{2}\right) ^{1/4}%
\frac{\omega (T)}{n}.
\end{align*}%
%
%
%Error function $f-\sum_{k=1}^{500}\langle f,h_k\rangle h_k$ approximated by $f-\left(\sum_{k=1}^{40}\langle h_k\rangle f,h_k +\sum_{k=41}^{500}\langle f, d_k\rangle d_k\right)$, where $d_k$ is the Dominici approximation to $h_k$ given in Theorem 1.1 of [HKB].

\bigskip

\begin{center}
\textbf{Acknowledgment}

\medskip
\end{center}

\emph{The research was supported by the Natural Sciences and Engineering
Council of Canada (NSERC) grant MLH, RGPIN-2014-04621.}

\end{document}